\newtheorem{theorem}{Theorem}[section]
\newtheorem{lemma}[theorem]{Lemma} 
\newtheorem{proposition}[theorem]{Proposition} 
\newtheorem{question}{Question}
\newcommand{\p}[1]{\noindent {\newline\bf #1.}}
\newcommand{\out}{\operatorname{Out}}
\newcommand{\aut}{\operatorname{Aut}}
\newcommand{\inn}{\operatorname{Inn}}
\newcommand{\fix}{\operatorname{Fix}}
\newcommand{\im}{\operatorname{im}}
\newcommand{\BS}{\operatorname{BS}}
\newcommand{\bGamma}{\boldsymbol{\Gamma}}
\renewcommand{\>}{\rangle}
\title[Bass--Jiang group for aut.-induced HNN-extensions]{The Bass--Jiang group for automorphism-induced HNN-extensions}
\author{Alan D. Logan}
\address{School of Mathematical and Computer Sciences\\ Heriot-Watt University\\ Edinburgh, EH14 4AS, UK.}
\email{A.Logan@hw.ac.uk}
\subjclass[2010]{20F65, 20E06, 20E08, 20E26, 20F28}
\keywords{HNN-extensions, outer automorphism groups, residually finite groups, Bass--Serre theory}
\begin{document}
\maketitle

\begin{abstract}
We use the Bass--Jiang group for automorphism-induced HNN-extensions to build a framework for the construction of tractable groups with pathological outer automorphism groups. We apply this framework to a strong form of a question of Bumagin--Wise on the outer automorphism groups of finitely presented, residually finite groups.
\end{abstract}

\section{Introduction}
\label{introduction}
Matumoto proved that every group $Q$ can be realised as the outer automorphism group of some group $G_Q$ \cite{matumoto1989any}. Many authors have refined this result by placing restrictions on the groups $Q$ and $G_Q$ \cite{kojima1988isometry} \cite{gobel2000outer} \cite{droste2001all} \cite{braun2003outer} \cite{BumaginWise2005} \cite{frigerio2005countable} \cite{minasyan2009groups} \cite{logan2015outer} \cite{LoganNonRecursive} \cite{LoganHNN}. Bumagin--Wise asked the following question \cite[Problem 1]{BumaginWise2005}:
\begin{question}[Bumagin--Wise]\label{q:BW}
Can every countable group $Q$ be realised as the outer automorphism group of a finitely generated, residually finite group $G_Q$?
\end{question}

Our main result is Theorem \ref{thm:BWfinitelypresented}, which relates to a strong form of Question~\ref{q:BW} where the group $G_Q$ is taken to be finitely presented rather than finitely generated. Almost nothing is known in general about the outer automorphism groups of finitely presented, residually finite groups.
A group $H$ has \emph{Serre's property FA} if every action of $H$ on any tree has a global fixed point;
this is related to the property of not splitting non-trivially as a free product with amalgamation or HNN-extension \cite[Theorem 15]{trees}.

\begin{theorem}
\label{thm:BWfinitelypresented}
Suppose $Q$ is a finitely presented group with Serre's property FA. Then there exists a finitely presented group $G_Q$ such that $Q$ embeds with finite index into $\out(G_Q)$. Moreover, if $Q$ is residually finite then $G_Q$ can be chosen to be residually finite.
\end{theorem}

Examples of finitely presented groups $Q$ with Serre's property FA are $\operatorname{SL}_3(\mathbb{Z})$ \cite{serre1974amalgames}, triangle groups \cite{trees}, finitely presented groups with Kazhdan's property T \cite{yasuo1982propertyT}, random groups in the sense of Gromov \cite{dahmani2011random}, R. Thompson's groups $T$ and $V$ \cite{farley2011proof}, and the Brin--Thompson groups $nV$ \cite{kato2015higher}.

The proof of Theorem \ref{thm:BWfinitelypresented} has two main ingredients. Firstly, Theorem \ref{thm:BWfinitelypresented} applies a certain framework built in this paper and explained below. Secondly, Theorem \ref{thm:BWfinitelypresented} applies a version of Rips' construction \cite{belegradek2008rips}. Rips' construction is usually applied to obtain finitely generated, non-finitely presentable groups with pathological properties; our application to finitely presented groups is unusual.

\p{The framework}
This paper principally consists of an in-depth analysis of the Bass--Jiang group for automorphism-induced HNN-extensions. The Bass--Jiang group of an HNN-extension $G$, defined in Section~\ref{sec:BJdefinitions}, is a subgroup of $\out(G)$ which arises naturally in the context of Bass--Serre theory.
Automorphism-induced HNN-extensions, defined in Section \ref{sec:AutHNN}, are a class of tractable HNN-extensions; by ``tractable'' we mean that they are an easy class of groups to work with and possess nicer properties than general HNN-extensions.
In Section \ref{sec:applications} we package the technical results of this analysis into a framework for the construction of tractable groups with pathological outer automorphism groups. This framework is applied to prove Theorem \ref{thm:BWfinitelypresented} (see also \cite{LoganNonRecursive} \cite{LoganHNN}).

\p{Describing \boldmath{$\out(G)$}}
Theorem \ref{thm:introthm} is a striking, non-technical special case of the framework described in Section \ref{sec:applications}. This theorem gives a convenient description of $\out(G)$ for certain residually finite HNN-extensions $G$. As our motivation comes from Question \ref{q:BW}, the assumption that $G$ is residually finite is completely natural. For functions $\rho$ and $\sigma$ write $\rho\sigma(x):=\sigma(\rho(x))$. Write $\gamma_g$ for the inner automorphism of $H$ corresponding to conjugation by $g$, so $\gamma_g(h):=g^{-1}hg$ for all $h\in H$.
For $\phi\in\aut(H)$ define $\fix(\phi):=\{h\in H: \phi(h)=h\}$.
Also define:
\[
A_{(K, \phi)}:=\{\delta\in\aut(H): \delta(K)=K, \:\exists\: a\in H \textnormal{ s.t. } \delta\phi(k)=\phi\delta\gamma_a(k) \:\forall\: k\in K\}
\]
and this is a subgroup of $\aut(H)$.

\begin{theorem}
\label{thm:introthm}
Let $G\cong \langle H, t; tkt^{-1}=\phi(k), k\in K\rangle$, $K\lneq H$ and $\phi\in\aut(H)$. Assume that $Z(H)\leq\fix(\phi)$, that $H$ is finitely generated and has Serre's property FA, and that $G$ is residually finite.
Then we have a short exact sequence:
\[
1\rightarrow C_H(K)\rtimes\frac{N_H(K)}{Z(H)K} \rightarrow \out^0(G)\rightarrow \frac{A_{(K, \phi)}\inn(H)}{\inn(H)}\rightarrow 1
\]
where $\out^0(G)$ is an index-one or -two subgroup of $\out(G)$.
\end{theorem}

\begin{proof}
Theorem \ref{thm:introthm} follows immediately from Theorems \ref{thm:equality} and \ref{thm:trivialcenter}.
\end{proof}
Theorem \ref{thm:trivialcenter} also classifies when $\out^0(G)=\out(G)$; these conditions are applicable here but are omitted for brevity. Note that the proof of Theorem \ref{thm:BWfinitelypresented} requires a more general result than Theorem \ref{thm:introthm}, as the groups $G=G_Q$ in Theorem \ref{thm:BWfinitelypresented} are not necessarily residually finite.

\p{A note on labels of results}
In a previous paper \cite{LoganNonRecursive} we cited certain results from this current paper. The labels of the results in this current paper have subsequently changed. We record the relevant changes here:
Theorem A is now Theorem \ref{thm:equality},
Lemma 2.1 is now Lemma \ref{lem:HhasFA},
Lemma 5.2 is now Lemma \ref{lem:finitecenter}, and
Proposition 5.3 is now Proposition \ref{prop:Aut}.

\p{Outline of the paper}
In Section~\ref{sec:AutHNN} we define automorphism-induced HNN-extensions and explain their ``tractability''.
In Section~\ref{sec:BJdefinitions} we define the Bass--Jiang group and the Pettet group for HNN-extensions.
In Section~\ref{sec:provingEquality} we prove conditions implying that the Bass--Jiang group is the whole outer automorphism group; our main result is Theorem \ref{thm:equality}.
In Section~\ref{sec:description} we give a description of the Bass--Jiang group for automorphism-induced HNN-extensions; our main result is Theorem \ref{thm:description}.
In Section~\ref{sec:commutativity} we prove results relating the structure of the Bass--Jiang group of an automorphism-induced HNN-extension $G$ to commutativity in the base group of $G$; our main result is Theorem \ref{thm:trivialcenter}.
In Section~\ref{sec:applications} we package the technical results of this paper into a framework for the construction of tractable groups with pathological outer automorphism groups, and we prove Theorem \ref{thm:BWfinitelypresented}.

\p{Acknowledgements} The author would like to thank Stephen J. Pride and Tara Brendle for many helpful discussions about this paper. He would also like to thank Henry Wilton for introducing him to the Belegradek--Osin version of Rips' construction, and the referee for their helpful comments and suggestions.


\section{Automorphism-induced HNN-extensions}
\label{sec:AutHNN}
Let $H$ be a group and let $\eta: K\rightarrow K^{\prime}$ be an isomorphism of subgroups of $H$. An \emph{HNN-extension of $H$ over $\eta$} is a group with relative presentation $G=\langle H, t; tkt^{-1}=\eta(k), k\in K\rangle$. We write $G=H\ast_{(K, K^{\prime}, \eta)}$.
An \emph{automorphism-induced HNN-extension} is an HNN-extension with relative presentation
\[
G=\<H, t; tkt^{-1}=\phi(k), k\in K\>
\]
where $\phi\in\aut(H)$ and $K\lneq H$;%
\footnote{If $H=K$ then $G=H\rtimes_{\phi}\mathbb{Z}$ is the mapping torus of $\phi$. Here the Bass--Jiang group (see Section \ref{sec:BJdefinitions}) is very different because $\phi$ extends to an inner automorphism \cite{logan2015outer}.}
the isomorphism of associated subgroups is induced by the automorphism $\phi$. We write $G=H\ast_{(K, \phi)}$. Throughout this paper we use the letters $G$, $H$, $K$ and $\phi$ as above.

In a general HNN-extension the form of the embeddings of the two associated subgroups $K$ and $K^{\prime}$ may be completely different. For example, $K$ may be normal while $K^{\prime}$ is malnormal. In contrast, in an automorphism-induced HNN-extension the form of the embeddings of $K$ and $K^{\prime}=\phi(K)$ are necessarily the same (both normal, both malnormal, and so on), and in practice the image group $\phi(K)$ may be disregarded. For example, supposing $H$ is finitely generated and residually finite, then $G=H\ast_{(K, \phi)}$ is residually finite if and only if $K$ is residually separable in $H$ \cite[Theorem A]{Logan2017Residual}. In this current paper, Theorems \ref{thm:introthm}, \ref{thm:description} and \ref{thm:trivialcenter} do not mention the image group $\phi(K)$.

The \emph{Baumslag--Solitar groups} $\BS(m, n)=\langle a, t; ta^{m}t^{-1}=a^{n}\rangle$ are the HNN-extensions of the infinite cyclic group with non-trivial associated subgroups. These provide standard examples of HNN-extensions with pathological properties. For example, Baumslag--Solitar groups can be non-Hopfian (they are Hopfian if and only if $n=1$, or $m=1$, or $\gcd(m, n)\neq1$ \cite{collins1983automorphisms}); they can be Hopfian but non-residually finite (they are residually finite if and only if $n=1$, or $m=1$, or $|m|=|n|$ \cite{meskin1972nonresidually}, and note that a finitely generated, residually finite group is Hopfian); and they can have non-finitely generated outer automorphism group (for example, $\out(\BS(2, 4))$ is not finitely generated \cite{collins1983automorphisms}). However, automorphism-induced Baumslag--Solitar groups have $|m|=|n|$, so are residually finite, by the above classification, and have virtually cyclic outer automorphism group \cite{GHMR}.

The above two paragraphs demonstrate that automorphism-induced HNN-extensions are more tractable and in certain cases have nicer properties than general HNN-extensions. Therefore, increasing the complexity of the base group $H$ maintains the tractability of these HNN-extensions but can allow for pathological properties; see, for example, \cite{LoganNonRecursive} where $H$ is a cubulated hyperbolic group with Serre's property FA, and \cite{LoganHNN} where $H$ is a triangle group.

\p{Notation}
We write $h^g:=g^{-1}hg$, and recall that we similarly write $\gamma_g(h):=g^{-1}hg$ for all $h\in G$. These contrast with the notation $tkt^{-1}=\phi(k)$ used here for HNN-extensions because it makes certain proofs more readable.

\p{The normaliser-quotient \boldmath{$N_H(K)/K$}}
The following proposition underlies this whole paper. It implies that $N_H(K)/K$ embeds into $\aut(G)$ for $G=H\ast_{(K, \phi)}$, and this paper represents an effort to obtain conditions implying that $N_H(K)/K$ is ``close to'' $\out(G)$ (for example, they are isomorphic, commensurable, and so on). We proved this proposition in collaboration with Ate\c{s} and Pride (see \cite{AtesPride}).

\begin{proposition}\label{prop:Aut}
Let $G=H\ast_{(K, \phi)}$. For $d\in N_H(K)$ define $\varphi_d: h\mapsto h$, $t\mapsto \phi(d)td^{-1}$. Then the map
\begin{align*}
\tau: N_H(K)&\mapsto \aut(G)\\
d&\mapsto \varphi_d
\end{align*}
is a homomorphism with kernel $K$.
\end{proposition}

\begin{proof}
It is routine to prove that $\varphi_d$ is a homomorphism. Then $\varphi_d$ is invertible, with inverse $\varphi_{d^{-1}}$, and so $\varphi_d\in\aut(G)$.
Now, $\tau$ is a homomorphism as $\varphi_{d_1}\varphi_{d_2}=\varphi_{d_1d_2}$, and it has kernel $K$ as $\phi(d)td^{-1}=t$ if and only if $d^{-1}\in K$ by Britton's Lemma \cite[Section IV.2]{L-S}.
\end{proof}

In the framework built by this paper and explained in Section \ref{sec:applications}, the pathological properties of $\out(G)$, where $G=H\ast_{(K, \phi)}$, are inherited from the normaliser-quotient $N_H(K)/K$. In particular, by Proposition \ref{prop:Aut}, if $N_H(K)/K$ is not residually finite and $H$ is finitely generated then $G$ is not residually finite \cite{Baumslag} (and this is an ``if and only if'' statement if $N_H(K)$ has finite index in $H$ \cite[Corollary 1.2]{Logan2017Residual}). We see later, in Theorem \ref{thm:trivialcenter}, that if $Z(H)\leq K\cap \fix(\phi)$ then $N_H(K)/K$ also embeds into $\out(G)$, while Lemma \ref{lem:finitecenter} gives a finite-index version of this embedding when $Z(H)\not\leq K$. Thus the properties of $N_H(K)/K$ are in a certain sense bestowed upon $G$.

{Note that if $H$ is infinite cyclic (and hence $G$ is a Baumslag--Solitar group) then $N_H(K)/K$ is cyclic, and so bestows no pathological properties upon $G$.}


\section{The Bass--Jiang group}
\label{sec:BJdefinitions}
This paper analyses the Bass--Jiang group for automorphism induced HNN-extensions. The proof of Theorem \ref{thm:BWfinitelypresented} applies this analysis. In this section we define the Bass--Jiang group $\out_H(G)$ of an HNN-extension $G=H\ast_{(K, K^{\prime}, \eta)}$. We also define the Pettet group $\out^H(G)$, which is applied in one of our main technical results, Theorem \ref{thm:equality}. The Pettet group lies between the Bass--Jiang group and the full outer automorphism group:
\[\out_H(G)\leq\out^H(G)\leq\out(G).\]
Note that these groups may be defined for graphs of groups in general \cite{Bass1996automorphism} \cite{pettet1999automorphism}, but for brevity we only define them for HNN-extensions.

\p{Bass--Serre theory}
The definitions in this section and certain proofs in Section~\ref{sec:provingEquality} apply Bass--Serre theory. All relevant definitions and notation are taken from Serre's book \cite{trees}. If $G=H\ast_{(K, K^{\prime}, \eta)}$ then $G$ acts in a standard way on a tree $\Gamma$, called \emph{the Bass--Serre tree of the HNN-extension $H\ast_{(K, K^{\prime}, \eta)}$}. Vertex-stabilisers $G_v$ of $\Gamma$ are precisely the conjugates of the base group $H$, so  $G_v=H^g$ for some $g\in G$, while edge-stabilisers $G_e$ are precisely the conjugates of the associated group $K$, so  $G_e=K^g$  for some $g\in G$. For vertices $v, w\in V\Gamma$ we write $[v, w]$ for the geodesic connecting $v$ and $w$, and we define the length of $[v, w]$, denoted $|[v, w]|$, to be the number of edges in $[v, w]$.

\p{The Bass--Jiang group}
If $\psi\in\aut(G)$ then $\widehat{\psi}$ denotes the coset of $\out(G)$ containing $\psi$.
The \emph{Bass--Jiang group} $\out_H(G)$ of $G=H\ast_{(K, K^{\prime}, \eta)}$ is the subgroup of $\out(G)$ consisting of those $\widehat{\psi}\in\out(G)$ which have a representative $\psi$ such that $\psi(H)=H$ and $\psi(t)=gt^{\epsilon}$ for some $g\in H$, $\epsilon=\pm1$. It is helpful to have a geometric view of the Bass--Jiang group: If $\aut_H(G)$ is the full pre-image of $\out_H(G)$ in $\aut(G)$ then $\aut_H(G)$ acts on the Bass--Serre $\Gamma$ of the HNN-extension $G=H\ast_{(K, K^{\prime}, \eta)}$ such that the diagram in Figure~\ref{fig:CommDiagBassJiang} commutes.

The Bass--Jiang group $\out_{\bGamma}(G_{\bGamma})$ for the fundamental group $G_{\bGamma}$ of a graph of groups $\bGamma$ was described by Bass--Jiang \cite{Bass1996automorphism}. Levitt described a group related to $\out_{\bGamma}(G_{\bGamma})$ in his investigation of $\out(G)$ for $G$ a one-ended hyperbolic group \cite{levitt2005automorphisms}. Gilbert--Howie--Metaftsis--Raptis gave conditions implying $\out_{\boldsymbol{\Gamma}}(G_{\bGamma})=\out(G_{\bGamma})$ for $G_{\bGamma}$ a {generalised Baumslag--Solitar group} \cite{GHMR}. Theorem \ref{thm:equality} mirrors this result of Gilbert--Howie--Metaftsis--Raptis.

\p{The Pettet group}
The \emph{Pettet group} $\out^H(G)$ of $G=H\ast_{(K, K^{\prime}, \eta)}$ is the subgroup of $\out(G)$ consisting of those $\widehat{\psi}\in\out(G)$ which have a representative $\psi$ such that $\psi(H)=H$. It is helpful to have a geometric view of the Pettet group: If $\aut^H(G)$ is the full pre-image of $\out^H(G)$ in $\aut(G)$ then $\aut^H(G)$ acts on the Bass--Serre tree $\Gamma$ of the HNN-extension $G=H\ast_{(K, K^{\prime}, \eta)}$ such that the diagram in Figure~\ref{fig:CommDiagPettet} commutes.

M. Pettet studied $\aut^H(G)$, and implicitly the Pettet group \cite{pettet1999automorphism}. He focused on conditions for the Pettet group to equal the Bass--Jiang group; these are related to Conditions (\ref{item:PettetEqualBJCondition1}) and (\ref{item:PettetEqualBJCondition2}) from Theorem \ref{thm:equality}.

\begin{figure}[ht]
\begin{centering}
\begin{minipage}{0.494\textwidth}
\begin{tikzpicture}[>=angle 90]
\matrix(a)[matrix of math nodes, row sep=3em, column sep=2.5em, text height=1.5ex, text depth=0.25ex] {G& \aut_H(G)\\ \aut(\Gamma)\\};
\path[->](a-1-1) edge node[auto] {$\theta$}(a-1-2);
\path[->](a-1-2) edge (a-2-1);
\path[->](a-1-1) edge (a-2-1);
\end{tikzpicture}
\caption{\footnotesize This diagram commutes, where $\aut_H(G)$ is the full pre-image of the Bass--Jiang group, $G=H\ast_{(K, K^{\prime}, \eta)}$, $\Gamma$ is the Bass--Serre tree, and $\theta$ is the canonical homomorphism with $\theta(G)=\inn(G)$.}\label{fig:CommDiagBassJiang}
\end{minipage}
\begin{minipage}{0.494\textwidth}
\begin{tikzpicture}[>=angle 90]
\matrix(a)[matrix of math nodes, row sep=3em, column sep=2.5em, text height=1.5ex, text depth=0.25ex] {G& \aut^H(G)\\ \aut(V\Gamma)\\};
\path[->](a-1-1) edge node[auto] {$\theta$}(a-1-2);
\path[->](a-1-2) edge (a-2-1);
\path[->](a-1-1) edge (a-2-1);
\end{tikzpicture}
\caption{\footnotesize This diagram commutes, where $\aut^H(G)$ is the full pre-image of the Pettet group, $G=H\ast_{(K, K^{\prime}, \eta)}$, $V\Gamma$ is the vertices of the Bass--Serre tree, and $\theta$ is the canonical homomorphism with $\theta(G)=\inn(G)$.}\label{fig:CommDiagPettet}
\end{minipage}
\end{centering}
\end{figure}

\p{Two examples}
We now give two examples which demonstrate that, in general, $\out_H(G)\lneq \out^H(G)$ and $\out^H(G)\lneq\out(G)$.
Our first example is essentially due to Collins--Levin \cite{collins1983automorphisms} (see also Levitt \cite{levitt2007GBSautomorphism}). Consider the HNN-extension $G_1=\langle a, t; ta^2t^{-1}=a^4\rangle$, with base group $H_1=\langle a\rangle$. The map $\psi_1: a\mapsto a, t\mapsto t^{-1}a^2t^{2}$ is an automorphism of $G_1$ \cite[Lemma 1.6]{collins1983automorphisms}. Clearly $\widehat{\psi}_1\in\out^{H_1}(G_1)$. However, it is a straight forward exercise in HNN-extensions, applying for example Britton's lemma, to prove that $\widehat{\psi}_1\not\in\out_{H_1}(G_1)$. Hence, $\out_{H_1}(G_1)\lneq \out^{H_1}(G_1)$.

We shall write $F(x, y)$ for the free group with free basis $\{x, y\}$.
Consider the automorphism-induced HNN-extension $G_2=\langle a, b, t; a^t=b\rangle$, with base group $H_2=F(a, b)$. Clearly $G_2=F(a, t)$, and hence the map $\psi_2: a\mapsto at, t\mapsto at^2$ is an automorphism of $G_2$. However, $\widehat{\psi}_2\not\in\out^{H_2}(G_2)$ as $at$ is not conjugate to any power of $a$ in $F(a, t)$, so $\out^{H_2}(G_2)\lneq\out(G_2)$.


\section{The Bass--Jiang group versus $\out(G)$}
\label{sec:provingEquality}
The Bass--Jiang group $\out_H(G)$ is most of interest when it is the full outer automorphism group $\out(G)$, so $\out_H(G)=\out(G)$. In this section we prove Theorem \ref{thm:equality}, which gives conditions implying that $\out_H(G)=\out(G)$. To prove these conditions we use the Pettet group $\out^H(G)$, as defined in Section~\ref{sec:BJdefinitions}. Throughout the remainder of this paper we assume all HNN-extensions are automorphism-induced, so $G=H\ast_{(K, \phi)}$, unless we explicitly state otherwise. We emphasize that this assumption is necessary for our results.

Lemma \ref{lem:HhasFA} gives a condition implying $\out^H(G)=\out(G)$. Lemma \ref{lem:MirrorPettet} gives conditions implying $\out_H(G)=\out^H(G)$. These lemmas combine to prove Theorem \ref{thm:equality}.

\subsection{Conditions implying \boldmath{$\out^H(G)=\out(G)$}}
Lemma \ref{lem:HhasFA} now proves that for $G=H\ast_{(K, \phi)}$, if $H$ has Serre's property FA then the Pettet group is equal to the full outer automorphism group. We then have $\out_H(G)\leq\out^H(G)=\out(G)$.

We note the similarity between Lemma \ref{lem:HhasFA} and a comment of Pettet \cite[Introduction]{pettet1999automorphism}. The principal difference is that Pettet is additionally assuming conditions implying $\out_H(G)=\out^H(G)$; we require no such preliminary assumptions. A subgroup $K$ of a group $H$ is \emph{conjugacy maximal} if there does not exist any $a\in H$ such that $K\lneq K^a$. The proof of Lemma \ref{lem:HhasFA} shows that under the conditions of the lemma the base group $H$ is always conjugacy maximal in $G=H\ast_{(K, \phi)}$. The proof of Lemma \ref{lem:HhasFA} may be written purely algebraically; we have used Bass--Serre theory so as to be similar to the other proofs on Section~\ref{sec:provingEquality}. Recall that $K\neq H$ in an automorphism-induced HNN-extension.

\begin{lemma}
\label{lem:HhasFA}
Let $G=H\ast_{(K, \phi)}$. If $H$ has Serre's property FA then $\out^H(G)=\out(G)$.
\end{lemma}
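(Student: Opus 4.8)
The plan is to show that every automorphism of $G = H \ast_{(K,\phi)}$ preserves the conjugacy class of the base group $H$, i.e. admits a representative $\psi$ with $\psi(H) = H$; by definition this says exactly that $\widehat{\psi} \in \out^H(G)$. The tool is Bass--Serre theory applied to the Bass--Serre tree $\Gamma$ of $G$. Given $\psi \in \aut(G)$, the subgroup $\psi(H)$ acts on $\Gamma$, and since $H$ has Serre's property FA, so does $\psi(H)$ (being isomorphic to $H$); hence $\psi(H)$ fixes a point of $\Gamma$. A fixed vertex means $\psi(H) \leq G_v$ for some $v \in V\Gamma$, and vertex-stabilisers are conjugates of $H$, so $\psi(H) \leq H^g$ for some $g \in G$. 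Replacing $\psi$ by $\gamma_{g^{-1}} \circ \psi$ (i.e. composing with an inner automorphism, which does not change $\widehat\psi$), we may assume $\psi(H) \leq H$.

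The remaining task is to upgrade the inclusion $\psi(H) \leq H$ to equality. Here I would exploit the automorphism-induced hypothesis, in the form hinted at by the excerpt: the proof should establish that $H$ is \emph{conjugacy maximal} in $G$, meaning there is no $a \in G$ with $H \lneq H^a$. Granting that, apply $\psi^{-1}$: from $\psi(H) \leq H$ we get $H \leq \psi^{-1}(H)$, and $\psi^{-1}(H)$ is again a subgroup isomorphic to $H$ with FA, so by the same argument $\psi^{-1}(H) \leq H^h$ for some $h \in G$; combining, $H \leq H^h$, and conjugacy maximality forces $H = H^h$, hence $\psi^{-1}(H) = H$ and so $\psi(H) = H$. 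Thus $\widehat\psi \in \out^H(G)$, proving $\out^H(G) = \out(G)$.

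To see that $H$ is conjugacy maximal in $G$, suppose $H \lneq H^a$ with $a \in G$. Write $a$ in normal form for the HNN-extension $H\ast_{(K,\phi)}$; the subgroup $H^a = a^{-1}Ha$ stabilises the vertex $w = a^{-1}\cdot v_0$ where $v_0$ is the base vertex with $G_{v_0} = H$, while $H = G_{v_0}$ stabilises $v_0$. If $H \leq H^a$ then $H$ fixes both $v_0$ and $w$, hence fixes the geodesic $[v_0, w]$ pointwise, so $H$ fixes the edges incident to $v_0$ along this path; but the edge-stabilisers at $v_0$ are the subgroups $K$ and $\phi(K)$ (up to the $H$-action), and $K \lneq H$, $\phi(K) \lneq H$ since the extension is automorphism-induced. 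So $H$ cannot fix an edge at $v_0$ unless $[v_0,w]$ is trivial, i.e. $w = v_0$, i.e. $a \in H$ — but then $H^a = H$, contradicting strictness. I expect this conjugacy-maximality step to be the main obstacle: one must handle the normal-form bookkeeping for $a$ carefully (in particular ruling out the subtlety that a non-trivial $a \in G$ could still satisfy $a^{-1}\cdot v_0 = v_0$ only when $a \in H$, which is where $K \neq H$ is essential), and make sure the argument does not secretly assume $\psi(H) = H$ at the outset. Everything else is a direct application of property FA together with the structure of vertex- and edge-stabilisers recorded in Section~\ref{sec:definitions}.
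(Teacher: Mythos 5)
Your proof is correct and follows essentially the same route as the paper: apply property FA to conclude $\psi(H)$ fixes a vertex and hence lies in a conjugate of $H$, then use that edge-stabilisers at that vertex are conjugates of the \emph{proper} subgroups $K,\phi(K)$ of $H$ to rule out a strict inclusion. The paper's proof leaves the conjugacy-maximality of $H$ in $G$ implicit (it remarks on this just before the lemma), whereas you isolate it as an explicit intermediate claim; that is a presentational difference, not a mathematical one.
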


\begin{proof}
We prove that if $\psi\in\aut(G)$ then $\psi(H)=H^g$, where $g\in G$; this is sufficient as then $\psi\gamma_g^{-1}$ is a representative for $\widehat{\psi}$ such that $\psi\gamma_g^{-1}(H)=H$. So, let $\psi\in\aut(G)$ and consider the action of $\psi(H)$ on the Bass--Serre tree $\Gamma$ of $G$. As $H$ has Serre's property FA, $\psi(H)$ stabilises some vertex of $\Gamma$. Hence, $\psi(H)\leq H^{g_1}$ with $g_1\in G$. Similarly, $\psi^{-1}(H)\leq H^{g_2}$ with $g_2\in G$, and so $H^{g_3}\leq \psi(H)$ with $g_3:=\psi(g_2)^{-1}\in G$.

Suppose that $\psi(H) \lneq H^{g_1}$ and we shall find a contradiction. By the above, $H^{g_3}\lneq H^{g_1}$. Now, there exist vertices $u,w\in V\Gamma$ such that $G_u=H^{g_1}$ and $G_w=H^{g_3}$. If $u=w$ then $H^{g_1}=H^{g_3}$, a contradiction. So $u\neq w$ and $H^{g_3}$ stabilises the geodesic $[u, w]$. In particular, $H^{g_3}$ stabilises the initial edge of $[u, w]$, and therefore there exists $g_4\in G$ such that $H^{g_3}=H^{g_4}\leq K^{g_4}$ or $H^{g_3}=H^{g_4}\leq \phi(K)^{g_4}$.
Hence $H=K$, a contradiction.
\end{proof}

\subsection{Conditions implying \boldmath{$\out_H(G)=\out^H(G)$}}
\label{sec:provingEqualityPettet}
We now work towards proving Lemma \ref{lem:MirrorPettet}, which gives sufficient conditions for the Bass--Jiang group to be equal to the Pettet group of $G=H\ast_{(K, \phi)}$. We then have $\out_H(G)=\out^H(G)\leq\out(G)$.

\p{Conjugacy}
We begin our proof of Lemma \ref{lem:MirrorPettet} with Lemma \ref{lem:conjugationAut}, which describes conjugacy in automorphism-induced HNN-extensions. A word $U$ in an HNN-extension $G=H\ast_{(K, K^{\prime}, \eta)}$ is a product of the form $h_0t^{\epsilon_1}h_1t^{\epsilon_2}h_2\cdots h_{n-1}t^{\epsilon_n}h_n$ where $h_i\in H$, $\epsilon_i=\pm1$.
For words $U$ and $V$ we write $U\equiv V$ if $U$ and $V$ are exactly the same word, and we say $U$ is \emph{$t$-reduced} if it contains no subwords of the form $tkt^{-1}$, $k\in K$, or $t^{-1}k^{\prime}t$, $k^{\prime}\in K^{\prime}$. Every element of an HNN-extension has a representative word which is $t$-reduced, by Britton's Lemma.
\begin{lemma}\label{lem:conjugationAut}
Let $G=H\ast_{(K, \phi)}$. For all $g\in G$ there exists $i\in\mathbb{Z}$ and $a\in H$ such that if $b, c\in H$ with $b^g=c$ then $c=\phi^{i}\gamma_{a}(b)$.
\end{lemma}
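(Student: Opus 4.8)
The plan is to analyse what happens when two elements $b,c$ of the base group $H$ are conjugate in $G=H\ast_{(K,\phi)}$, using Britton's Lemma to control the conjugating element. First I would fix $g\in G$ and take a $t$-reduced word $U\equiv h_0t^{\epsilon_1}h_1\cdots t^{\epsilon_n}h_n$ representing it. Suppose $b,c\in H$ satisfy $b^g=c$, i.e. $g^{-1}bg=c$ in $G$; then $h_n^{-1}t^{-\epsilon_n}\cdots t^{-\epsilon_1}h_0^{-1}\,b\,h_0t^{\epsilon_1}\cdots t^{\epsilon_n}h_n = c$, and since the right-hand side lies in $H$, the left-hand side must $t$-reduce all the way down. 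I would argue by induction on $n$. In the base case $n=0$ we have $g=h_0\in H$, so $c=h_0^{-1}bh_0=\gamma_{h_0}(b)$ and we take $i=0$, $a=h_0$.

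For the inductive step, consider the innermost part of the conjugate: the word $t^{-\epsilon_1}(h_0^{-1}bh_0)t^{\epsilon_1}$ (after absorbing $h_0$) sits at the centre, and because the whole expression is $t$-reduced as written (being a cyclic-type conjugate of a reduced word), the only way to reduce it is for a Britton pinch to occur, which forces $h_0^{-1}bh_0$ to lie in the appropriate associated subgroup. Here is where the automorphism-induced hypothesis is essential: if $\epsilon_1=1$ then $t(h_0^{-1}bh_0)t^{-1}$ is only defined-via-pinch when $h_0^{-1}bh_0\in K$, and then it equals $\phi(h_0^{-1}bh_0)$; if $\epsilon_1=-1$ then $h_0^{-1}bh_0\in\phi(K)=K'$ and $t^{-1}(h_0^{-1}bh_0)t=\phi^{-1}(h_0^{-1}bh_0)$. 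In either case, setting $b'$ equal to this pinched value, we have $b' = \phi^{\epsilon_1}\gamma_{h_0}(b)$, and $b'$ is conjugated to $c$ by the shorter word $h_1t^{\epsilon_2}\cdots t^{\epsilon_n}h_n$ of syllable length $n-1$. Applying the inductive hypothesis gives $j\in\mathbb Z$ and $a'\in H$ with $c=\phi^{j}\gamma_{a'}(b')=\phi^{j}\gamma_{a'}\phi^{\epsilon_1}\gamma_{h_0}(b)$.

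To finish I need to rewrite $\phi^{j}\gamma_{a'}\phi^{\epsilon_1}$ in the form $\phi^{i}\gamma_{a}$. Using the identity $\gamma_{a'}\phi^{\epsilon_1}=\phi^{\epsilon_1}\gamma_{\phi^{\epsilon_1}(a')}$ (which follows since, writing the action on the left or tracking $\phi\gamma_x\phi^{-1}=\gamma_{\phi(x)}$), one commutes the power of $\phi$ past the inner automorphism at the cost of applying $\phi$ to the conjugating element; collecting terms yields $c=\phi^{i}\gamma_a(b)$ with $i=j+\epsilon_1$ and $a$ an explicit product of $\phi$-images of $a'$ and $h_0$. Crucially $i$ and $a$ depend only on $g$ (on the word $U$ and the inductively produced data), not on $b$ or $c$, which is exactly the statement. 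The main obstacle — and the point where automorphism-induction cannot be dropped — is the pinch step: in a general HNN-extension the pinched element would land in $K'$ via the map $\eta$ in one direction and in $K$ via $\eta^{-1}$ in the other, and these need not be restrictions of a single automorphism of $H$, so there would be no way to express the result using a power of one map $\phi$; here $\eta=\phi|_K$ makes the bookkeeping collapse to powers of $\phi$. A secondary subtlety is making the induction on syllable length genuinely well-founded: one should check the conjugating word really can be taken $t$-reduced and that each pinch strictly decreases $n$, so that the recursion terminates; alternatively one can phrase the whole argument as a statement about reduced words representing $g$ and invoke Britton's Lemma once at the top level.
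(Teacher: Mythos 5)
Your argument is essentially the paper's proof in algebraic rather than Bass--Serre language: both induct on the number of $t$-syllables in a $t$-reduced word for $g$, stripping one syllable at a time with a Britton pinch so that the data $(i,a)$ is accumulated syllable by syllable and depends only on the word, not on $b$ or $c$. The paper phrases the induction as being on the geodesic length $|[u,w]|$ and peels the \emph{last} syllable ($W\equiv Vd_{n-1}t^{\epsilon_n}d_n$), whereas you peel the \emph{first}; this is a cosmetic difference. You also correctly isolate where automorphism-induction is used, namely that both pinch directions are powers of a single automorphism $\phi$ of $H$, which is exactly the point the paper emphasizes.

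One warning: your pinch is oriented the wrong way. The inner subword is $t^{-\epsilon_1}(h_0^{-1}bh_0)t^{\epsilon_1}$. For $\epsilon_1=1$ this is $t^{-1}(\cdot)t$, which pinches precisely when $h_0^{-1}bh_0\in\phi(K)$, with value $\phi^{-1}(h_0^{-1}bh_0)$; for $\epsilon_1=-1$ it is $t(\cdot)t^{-1}$, pinching when $h_0^{-1}bh_0\in K$, with value $\phi(h_0^{-1}bh_0)$. So $b'=\phi^{-\epsilon_1}\gamma_{h_0}(b)$, not $\phi^{\epsilon_1}\gamma_{h_0}(b)$, and correspondingly $\gamma_{a'}\phi^{\epsilon_1}=\phi^{\epsilon_1}\gamma_{\phi^{-\epsilon_1}(a')}$, giving $i=j-\epsilon_1$ rather than $j+\epsilon_1$ (compare the paper's $i=j-\epsilon_n$). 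Since the lemma is purely existential these slips do not invalidate the argument, but the explicit formulas are off. Also beware that the paper uses the left-to-right composition convention $\rho\sigma(x):=\sigma(\rho(x))$, while you appear to use the usual right-to-left one; the two produce different-looking but equally valid pairs $(i,a)$.
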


Note that $i$ and $a$ are dependent on $g$, not on $b$ and $c$.

\begin{proof}
Let $g\in G$ be arbitrary. Let $u, w\in V\Gamma$ be the vertices of the Bass--Serre tree $\Gamma$ of $G$ such that $G_u=H$ and $G_w=H^g$. Let $W$ be a $t$-reduced word in $H\ast_{(K, \phi)}$ representing $g$. We induct on $n=|[u, w]|$, the length of the geodesic $[u, w]$. If $n=0$ then $u=w$ and the result trivially holds. For the induction step, note that if $n>0$ then $[u, w]$ decomposes as $[u, v]$ and $[v, w]$ where $|[u, v]|=n-1$ and $|[v, w]|=1$. We also have a decomposition $W\equiv Vd_{n-1}t^{\epsilon_n}d_n$, $d_i\in H$ and where $V$ is such that $G_v=H^V$ and the result holds for $V$. Then:
\begin{align*}
c=b^W
&=(b^V)^{d_{n-1}t^{\epsilon_n}d_n}\\
&=(\phi^j\gamma_{a_{n-1}}(b))^{d_{n-1}t^{\epsilon_n}d_n}\\
&=\phi^i\gamma_{a}(b)
\end{align*}
as required, where $i=j-\epsilon_n$ and $a=\phi^{-\epsilon_n}(a_{n-1}d_{n-1})d_n$.
\end{proof}

We now prove Lemma \ref{lem:LevEqualPet}, which corresponds to Condition (\ref{item:PettetEqualBJCondition3}) from Lemma \ref{lem:MirrorPettet} (and Condition (\ref{item:PettetEqualBJCondition3}) from Theorem \ref{thm:equality}).

\begin{lemma}
\label{lem:LevEqualPet}
Let $G=H\ast_{(K, \phi)}$. Suppose that $\out_H(G)\lneq\out^H(G)$. Additionally, suppose that there does not exist any $b\in H$ such that $\phi(K)\lneq \gamma_b(K)$ and there does not exist any $c\in H$ such that $\gamma_c(K)\lneq \phi(K)$. Then there exists $a\in H$ such that $\phi(K)=\gamma_{a}(K)$.
\end{lemma}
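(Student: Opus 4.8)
The plan is to take a witness $\psi\in\aut(G)$ with $\psi(H)=H$ but $\widehat\psi\notin\out_H(G)$, set $\delta:=\psi|_H\in\aut(H)$ and $s:=\psi(t)$, and study $s$ via the action of $G$ on the Bass--Serre tree $\Gamma$, with $v_0$ the vertex satisfying $G_{v_0}=H$. Let $\nu\colon G\to\mathbb{Z}$ be the homomorphism with $\nu(H)=0$, $\nu(t)=1$; since $\psi(H)=H$ it preserves $\ker\nu=\langle\langle H\rangle\rangle$, so $\nu(s)=\pm 1$, and as every elliptic element lies in a conjugate of $H\le\ker\nu$ it follows that $s$ is hyperbolic and $d(v_0,sv_0)$ equals the $t$-length of the reduced form of $s$. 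If this length were $1$, say $s=h_0t^{\epsilon}h_1$, then composing $\psi$ with the inner automorphism by a suitable $u\in h_1^{-1}K$ (resp. $u\in h_1^{-1}\phi(K)$) would give a representative of $\widehat\psi$ with $\psi(t)\in Ht^{\epsilon}$, contradicting $\widehat\psi\notin\out_H(G)$; hence $d(v_0,sv_0)\ge 2$, and since $\nu(s)=\pm1$ forces the length to be odd, in fact $d(v_0,sv_0)\ge 3$.

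Applying $\psi$ to the defining relations gives $s\delta(k)s^{-1}=\delta\phi(k)$ for $k\in K$, hence $s\delta(K)s^{-1}=\delta(\phi(K))\le H$; Lemma~\ref{lem:conjugationAut}, applied to conjugation by $s^{-1}$ and noting that the relevant exponent is $-\nu(s^{-1})=\nu(s)$, yields $\delta(\phi(K))=\phi^{\pm1}\gamma_a(\delta(K))$ for some $a\in H$. Geometrically $\delta(\phi(K))=H\cap sHs^{-1}=G_{[v_0,sv_0]}$ and $\delta(K)=H\cap s^{-1}Hs=G_{[v_0,s^{-1}v_0]}$, and likewise $\delta^{-1}(\phi(K))=G_{[v_0,\bar sv_0]}$, $\delta^{-1}(K)=G_{[v_0,\bar s^{-1}v_0]}$ with $\bar s:=\psi^{-1}(t)$. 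Recall also that every edge of $\Gamma$ at $v_0$ has stabiliser $\gamma_x(K)$ or $\gamma_x(\phi(K))$ for some $x\in H$, according to the two $H$-orbits of such edges.

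The main case is when $v_0$ is off the axis of $s$ or off the axis of $\bar s$; running the argument below on $\psi$ or on $\psi^{-1}$, assume $v_0$ is off the axis of $\bar s$. Then $[v_0,\bar sv_0]$ and $[v_0,\bar s^{-1}v_0]$ leave $v_0$ along a common first edge $\bar e$, so $G_{\bar e}=\gamma_{\bar x}(\bar C)$ with $\bar C\in\{K,\phi(K)\}$, and $G_{\bar e}$ contains both $\delta^{-1}(\phi(K))$ and $\delta^{-1}(K)$. On the other side, $\delta(\phi(K))$ lies in the stabiliser $\gamma_x(C_1)$ of the first edge of $[v_0,sv_0]$ and $\delta(K)$ lies in the stabiliser $\gamma_y(C_2)$ of the first edge of $[v_0,s^{-1}v_0]$, with $C_1,C_2\in\{K,\phi(K)\}$. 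If $\bar C=K$, applying $\delta^{-1}$ to $\delta(\phi(K))\subseteq\gamma_x(C_1)$ and using $\delta^{-1}(C_1)\subseteq\gamma_{\bar x}(K)$ gives $\phi(K)\subseteq\gamma_{\bar x\,\delta^{-1}(x)}(K)$; if $\bar C=\phi(K)$, applying $\delta^{-1}$ to $\delta(K)\subseteq\gamma_y(C_2)$ and using $\delta^{-1}(C_2)\subseteq\gamma_{\bar x}(\phi(K))$ gives $\gamma_z(K)\subseteq\phi(K)$ for some $z\in H$. In either case the hypothesis (no $b$ with $\phi(K)\lneq\gamma_b(K)$, no $c$ with $\gamma_c(K)\lneq\phi(K)$) rules out proper containment, so $\phi(K)=\gamma_a(K)$ for some $a\in H$, as required.

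This leaves the case where $v_0$ lies on both axes; this is the step I expect to be the main obstacle. Now $[v_0,sv_0]$ is a fundamental domain of the axis of $s$ of length $\ge 3$, so $\nu$ restricted to it has an interior local extremum at some vertex $v_i$, whose two incident geodesic edges have stabilisers that are conjugate \emph{inside} $G_{v_i}$ to the same one of $K$, $\phi(K)$ and both contain $\delta(\phi(K))$. Conjugating back to $v_0$ along the prefix element reaching $v_i$ and applying Lemma~\ref{lem:conjugationAut} again — this is exactly where being automorphism-induced is essential, since it guarantees that such a conjugation is realised on $H$ by a power of $\phi$ composed with an inner automorphism of $H$ — and combining the resulting inclusion with $\delta(\phi(K))=\phi^{\pm1}\gamma_a(\delta(K))$ and its counterpart for $\psi^{-1}$, one again obtains a containment of $\phi(K)$ in a conjugate of $K$, or of a conjugate of $K$ in $\phi(K)$, which the hypothesis upgrades to the desired equality. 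The delicate point is to keep track, through the iterated use of Lemma~\ref{lem:conjugationAut}, of which of the subgroups produced are \emph{honest} conjugates in $H$ of $K$ or of $\phi(K)$ rather than merely abstract automorphic copies, so that the non-containment hypothesis can be applied; the two sign choices $\nu(s),\nu(\bar s)=\pm1$ are absorbed by the symmetry between $K$ and $\phi(K)$, which is available precisely because the extension is automorphism-induced.
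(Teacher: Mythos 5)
Your approach---tracking the $t$-length $d(v_0,sv_0)$ via $\nu$, establishing $d(v_0,sv_0)\geq 3$, and then exploiting the geometry of the axes of $s=\psi(t)$ and $\bar s=\psi^{-1}(t)$---is genuinely different from the paper's. The ``main case'' (where $v_0$ is off the axis of $s$ or of $\bar s$) is handled correctly: the common first edge $\bar e$ of $[v_0,\bar sv_0]$ and $[v_0,\bar s^{-1}v_0]$ does exactly what you want, and the two-way analysis on $\bar C\in\{K,\phi(K)\}$ is sound. But, as you yourself flag, the case where $v_0$ lies on both axes is not closed. The sketch you give---find a local extremum of $\nu$ on $[v_0,sv_0]$, note that its two geodesic edges have stabilisers conjugate in $G_{v_i}$ to the same one of $K,\phi(K)$, then ``conjugate back and apply Lemma~\ref{lem:conjugationAut} again''---does not visibly produce a containment between $\phi(K)$ and an honest $H$-conjugate of $K$: the subgroups you land on are conjugates of the automorphic image $\delta(\phi(K))$ rather than of $K$ or $\phi(K)$ themselves, which is exactly the bookkeeping difficulty you anticipate. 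So there is a genuine gap.

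The paper's proof avoids the axis dichotomy entirely by invoking Pettet's Lemma~2.2 \cite[Lemma 2.2]{pettet1999automorphism}: there is a positive edge $e_j$ in the geodesic $[u,w]$ whose stabiliser is \emph{exactly} $\psi(K)$. Since $\psi(K)\leq G_{[u,w]}\leq G_{e_1}$ with $G_{e_1}\in\{K,\phi(K)\}$ after conjugation, Lemma~\ref{lem:conjugationAut} yields $G_{e_j}=\phi^i\gamma_{a_0}(K)$. Because $[u,w]$ has more than one edge, one can then compare $G_{e_j}$ with the stabiliser of an adjacent edge $f$, where $G_f=G_{e_j}^{a_1t^{\epsilon}a_2}$ and $\psi(K)=G_{e_j}\leq G_f$; unwinding through a single $\phi^{\pm1}$ and an $H$-conjugation produces $\gamma_{a_4}(K)\leq\phi^{-\epsilon}(K)$, which the hypothesis upgrades to equality. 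The two ingredients your argument is missing are precisely this exactness ($G_{e_j}=\psi(K)$ rather than mere containment) and the adjacent-edge comparison; with Pettet's lemma the proof needs no case split on whether $v_0$ sits on an axis. I would either import that lemma, or substantially expand the ``both axes'' case into a complete argument that genuinely lands on honest $H$-conjugates of $K$ and $\phi(K)$ rather than on $\delta$-images of them.
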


\begin{proof}
Suppose $\widehat{\psi}\in\out^H(G)\setminus\out_H(G)$, and let $\psi\in\widehat{\psi}$ be such that $\psi(H)=H$. Recall that $\psi$ acts on the vertices of the Bass--Serre tree $\Gamma$ of $G$. Let $u, v\in V\Gamma$ be connected by a single edge $e^{\prime}$, and suppose that $G_u=H$ and that $G_v=H^{t^{-1}}$. Then $G_{e^{\prime}}=K$. Write $w:=\psi(v)$, so $G_w=H^{\psi(t)^{-1}}$, and consider the geodesic $[u, w]$. As $\psi\not\in\aut_H(G)$, $|[u, w]|>1$. Note that $\psi(K)$ stabilises $[u, w]$ because $K$ stabilises $e^{\prime}$ (or, algebraically, because $\psi(K)\leq\psi(H), \psi(H^{t^{-1}})$), so $\psi(K)\leq G_{[u, w]}$. Note also that the first edge $e_1$ in this geodesic has stabiliser an $H$-conjugate either of $K$ or of $\phi(K)$, and without loss of generality we can assume $G_{e_1}$ is either $K$ or $\phi(K)$.

There exists a edge $e_j$ in the geodesic $[u, w]$ such that $G_{e_j}=\psi(K)$ \cite[Lemma 2.2]{pettet1999automorphism}, so $\psi(K)=K^g$ for some $g\in G$. Therefore, $G_{e_j}=K^g=\psi(K)\leq G_{[u, w]}\leq G_{e_1}$. As $G_{e_1}=K$ or $\phi(K)$, we have that $G_{e_j}=K^g\leq K$ or $\phi(K)$, so $G_{e_j}=\phi^i\gamma_{a_0}(K)$ for some $i\in\mathbb{Z}$, $a_0\in H$ by Lemma \ref{lem:conjugationAut}.

Recall that $[u, w]$ contains more than one edge. Let $f\in[u, w]$ be adjacent to $e_j$. Then $G_f=G_{e_j}^{a_1t^{\epsilon}a_2}$ for some $a_1, a_2\in H, \epsilon=\pm1$, so 
\[
\phi^i\gamma_{a_0}(K)=G_{e_j}\leq G_{[u, w]}\leq G_f=G_{e_j}^{a_1t^{\epsilon}a_2}=\phi^i\gamma_{a_0}(K)^{a_1t^{\epsilon}a_2}.
\]
Noting that $\phi^i\gamma_{a_0}(K)\leq H$, we therefore have that every element of $\phi^i\gamma_{a_0}(K)$ is equal to an element of the form $(a_1t^{\epsilon}a_2)^{-1}\phi^i\gamma_{a_0}(k)a_1t^{\epsilon}a_2$, $k\in K$, and hence of the form $\phi^{i-\epsilon}\gamma_{a_3}(k)$, with $i, \epsilon, a_3$ fixed. Hence, $\phi^i\gamma_{a_0}(K)\leq\phi^{i-\epsilon}\gamma_{a_3}(K)$, and so $\gamma_{a_4}(K)\leq\phi^{-\epsilon}(K)$. By the assumptions of the lemma $\gamma_{a}(K)=\phi(K)$, as required, where $a=a_4$ if $\epsilon=-1$ or $a=\phi(a_4)^{-1}$ if $\epsilon=1$.
\end{proof}

\p{Residual finiteness}
We now prove Lemma \ref{lem:BaumslagTretkoffGeneral}, which provides conditions for $\out_H(G)=\out^H(G)$ to hold in a general HNN-extension $G=H\ast_{(K, K^{\prime}, \eta)}$. Lemma \ref{lem:BaumslagTretkoffAutInd} is essentially Lemma \ref{lem:BaumslagTretkoffGeneral} for $G$ an automorphism-induced HNN-extension. If the conditions of either of these lemmas hold then the HNN-extension $G$ is residually finite.

Baumslag--Tretkoff gave conditions for general HNN-extensions $G=H\ast_{(K, K^{\prime}, \eta)}$ to be residually finite \cite{BaumslagTretkoff}, and the conditions of Lemma \ref{lem:BaumslagTretkoffGeneral} are a strong form of these residual finiteness conditions. A subgroup $N\leq H$ is \emph{characteristic} if $\psi(N)=N$ for all $\psi\in \aut(H)$.

\begin{lemma}
\label{lem:BaumslagTretkoffGeneral}
Let $G=\langle H, t; tkt^{-1}=\eta(k), k\in K\rangle$ where $\eta: K\rightarrow K^{\prime}$ is an isomorphism of subgroups of $H$, and let $H$ be finitely generated and residually finite. Suppose that for any positive integer $n$ and elements $x_i, y_i, z_i$ ($i=1, 2, \ldots, n$) with $x_i\in H\setminus K$, $y_i\in H\setminus K^{\prime}$, $z_i\in K\cap K^{\prime}\setminus 1$, there exists a characteristic subgroup $N$ of finite index in $H$ such that
\begin{enumerate}
\item $x_i K\cap N=y_i K^{\prime}\cap N=\emptyset$, the empty set, and $z_i\not\in N$,
\item $\eta$ maps $K\cap N$ into $N$ and $\eta^{-1}$ maps $K^{\prime}\cap N$ into $N$.
\end{enumerate}
Then $\out_H(G)=\out^H(G)$.
\end{lemma}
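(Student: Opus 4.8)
The plan is to prove the non‑trivial inclusion $\out^H(G)\subseteq\out_H(G)$; together with $\out_H(G)\leq\out^H(G)$ this gives the lemma. Fix a representative $\psi$ of $\widehat\psi\in\out^H(G)$ with $\psi(H)=H$. It is enough to show $\psi(t)\in HtH\cup Ht^{-1}H$: writing $\psi(t)=h_1t^{\epsilon}h_2$, the conjugate automorphism $x\mapsto h_2\psi(x)h_2^{-1}$ still fixes $H$ and sends $t$ to $(h_2h_1)t^{\epsilon}$, so $\widehat\psi\in\out_H(G)$. Two degenerate cases first. If $K=K'=H$ then $G=H\rtimes_{\eta}\mathbb Z$ with $H\trianglelefteq G$, and $\psi(H)=H$ forces $\psi$ to induce an automorphism of $G/H\cong\mathbb Z$, so $\psi(t)\in t^{\pm1}H=Ht^{\pm1}$; and if exactly one of $K,K'$ equals $H$, then condition~(2) of the hypothesis forces $KN=H$ (respectively $K'N=H$) for every admissible $N$, contradicting condition~(1) applied to an element of $H\setminus K$ (respectively $H\setminus K'$), so this case is vacuous. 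So assume $K\lneq H$ and $K'\lneq H$. Write $\psi(t)$ as a $t$-reduced word $W\equiv h_0t^{\epsilon_1}h_1\cdots t^{\epsilon_n}h_n$; by Britton's Lemma $n\geq 1$ (otherwise $\psi(t)\in H=\psi(H)$ and $G=\psi(G)\leq H$, impossible), and the task is to prove $n=1$.

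The idea is to transport $W$ faithfully into a finite quotient in which the conclusion is already known. Apply the hypothesis to the finite family consisting of the interior syllables of $W$ constrained by $t$-reducedness --- the $h_i\notin K$ flanked by $t$ and $t^{-1}$, and the $h_i\notin K'$ flanked by $t^{-1}$ and $t$ --- together with one extra element of $H\setminus K$ and one of $H\setminus K'$. This gives a characteristic subgroup $N$ of finite index in $H$ for which: condition~(2) makes $\overline\eta\colon KN/N\to K'N/N$ a well-defined isomorphism of subgroups of the finite group $\overline H:=H/N$, so $\overline G:=\langle \overline H,\overline t; \overline t\,\overline k\,\overline t^{-1}=\overline\eta(\overline k),\ \overline k\in KN/N\rangle$ is a genuine HNN-extension of $\overline H$ (hence $\overline H$ embeds in $\overline G$, and the quotient map $q\colon G\to\overline G$ restricts on $H$ to $H\twoheadrightarrow\overline H$); condition~(1) on the constrained syllables keeps $\overline W:=q(W)$ $t$-reduced, so $\overline W$ has $t$-length $n$ in $\overline G$; and condition~(1) on the two extra elements gives $\overline K:=KN/N\lneq\overline H$ and $\overline K':=K'N/N\lneq\overline H$. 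Since $N$ is characteristic in $H$ and $\psi(H)=H$ we have $\psi(N)=N$, so $\psi$ descends to $\overline\psi\in\aut(\overline G)$ with $\overline\psi(\overline H)=\overline H$ and $\overline\psi(\overline t)=\overline W$.

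Since $\overline H$ is finite, $\overline K$ (and $\overline K'$) is conjugacy maximal in $\overline H$ --- a subgroup and its conjugates have equal order --- so Pettet's theorem \cite[Theorem~1]{pettet1999automorphism} gives $\out^{\overline H}(\overline G)=\out_{\overline H}(\overline G)$. Moreover $N_{\overline G}(\overline H)=\overline H$, since a $t$-reduced word of positive length normalising $\overline H$ would, by Britton pinching, force $\overline H$ into a conjugate of $\overline K$ or $\overline K'$, impossible by orders. As $\widehat{\overline\psi}\in\out^{\overline H}(\overline G)=\out_{\overline H}(\overline G)$, it has a representative $\overline\psi'$ with $\overline\psi'(\overline H)=\overline H$ and $\overline\psi'(\overline t)=\overline g\,\overline t^{\pm1}$ for some $\overline g\in\overline H$. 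Then $\overline\psi$ and $\overline\psi'$ differ by an inner automorphism $\gamma_{\overline g_0}$ with $\overline g_0\in N_{\overline G}(\overline H)=\overline H$, so $\overline W=\overline\psi(\overline t)=\overline g_0^{-1}\overline g\,\overline t^{\pm1}\overline g_0$ has $t$-length $1$. Comparing with ``$\overline W$ has $t$-length $n$'' yields $n=1$, and hence $\psi(t)\in HtH\cup Ht^{-1}H$.

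The heart of the argument --- and the main obstacle --- is the second paragraph: checking that the Baumslag--Tretkoff conditions carry $W$ into $\overline G$ with no further cancellation and make $\overline G$ an honest HNN-extension of the \emph{finite} group $\overline H$. This is precisely the Baumslag--Tretkoff residual-finiteness mechanism, used here to push the single element $\psi(t)$ into a quotient where the Pettet group has already been identified with the Bass--Jiang group. The characteristic property of $N$ is exactly what lets $\psi$ descend to $\overline G$, and the full ``for every finite family'' strength of the hypothesis is needed because the family of constrained syllables of $\psi(t)$ may be arbitrarily large.
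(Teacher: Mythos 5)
Your proof is correct and takes essentially the same route as the paper's: project the putative counterexample $\psi$ to the finite quotient $\overline{G}$ furnished by the Baumslag--Tretkoff hypothesis, apply Pettet's theorem there, and read off a contradiction from $t$-lengths. The one genuine technical difference is how the induced automorphism of $\overline{G}$ is obtained: you observe that $\ker(G\to\overline{G})$ is the normal closure of $N$ and hence $\psi$-invariant (since $N$ is characteristic and $\psi(H)=H$), whereas the paper defines the induced map $\mu^{\ast}$ on generators and proves injectivity by appealing to Hopficity of $\overline{G}$, which costs a second citation of Baumslag--Tretkoff (residual finiteness of $\overline{G}$); your version is marginally cleaner in that respect. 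You also make explicit two facts the paper uses implicitly --- that $N_{\overline{G}}(\overline{H})=\overline{H}$, and that one must ensure $\overline{K},\overline{K'}\lneq\overline{H}$ --- which is a welcome precision. One small blemish: in the degenerate case where exactly one of $K,K'$ equals $H$, your stated reason is off; the actual contradiction is that $\overline{\eta}$ forces $|\overline{K}|=|\overline{K'}|=|\overline{H}|$ in the finite quotient while condition (1) forces $\overline{K}\lneq\overline{H}$. The case is vacuous either way, so this does not affect the argument.
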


\begin{proof}
Note that $\eta$ induces an isomorphism $\eta^{\ast}$ of $KN/N$ onto $K^{\prime}N/N$. We write $\overline{G}=\langle H/N, t; t(kN)t^{-1}=\eta^{\ast}(kN), k\in K\rangle$. Then the natural map $h\mapsto hN$ induces a homomorphism of $G$ onto $\overline{G}$ \cite[Proof of Theorem 4.2]{BaumslagTretkoff}. Note that $H/N$ is finite, so $\out_{H/N}(\overline{G})=\out^{H/N}(\overline{G})$ \cite[Theorem 1]{pettet1999automorphism}.

Suppose $\out_H(G)\lneq \out^H(G)$. Then there exists $\mu\in\aut^H(G)$ with $\mu|_H\in\aut(H)$ such that $\mu(t)=h_1t^{\epsilon_1}h_2t^{\epsilon_2}\ldots h_mt^{\epsilon_m}$ is $t$-reduced with $m>1$.
Note that for each $j=1, \ldots, m$, either $h_j\in H\setminus K$ or $h_j\in H\setminus K^{\prime}$. Therefore, pick $n$ and the elements $x_i, y_i, z_i$, $i=1, \ldots, n$, such that each $h_j$ is an $x_i$ or $y_j$ as appropriate, pick $N$ characteristic in $H$ appropriately and form the corresponding group $\overline{G}$. Now, $\mu$ induces a map $\mu^{\ast}: \overline{G}\rightarrow\overline{G}$ with $\mu^{\ast}(t)=h_1Nt^{\epsilon_1}h_2Nt^{\epsilon_2}\ldots h_mNt^{\epsilon_m}$ and $\mu^{\ast}(h)=\mu(h)N$.

We now prove that $\mu^{\ast}\in\aut(\overline{G})$. Firstly, note that $\mu^{\ast}|_{H/N}\in\aut(H/N)$ as $N$ is characteristic in $H$. Then, $\mu^{\ast}(t)(kN)\mu^{\ast}(t)^{-1}=\eta^{\ast}(kN)$ for all $k\in K$ because $\mu$ and $\mu^{\ast}|_{H/N}$ are homomorphisms. Hence, $\mu^{\ast}:\overline{G}\rightarrow \overline{G}$ is a homomorphism. It is surjective because $\mu(t)$ and $H$ generate $G$, and it is injective because $\overline{G}$ is finitely generated and residually finite \cite[Theorem 3.1]{BaumslagTretkoff} and hence Hopfian.

Then $\mu^{\ast}\in\aut^H(\overline{G})\setminus\aut_H(\overline{G})$ as $\mu^{\ast}|_{H/N}\in\aut(H/N)$ and the word \[h_1Nt^{\epsilon_1}h_2Nt^{\epsilon_2}\ldots h_mNt^{\epsilon_m}\] is $t$-reduced (as each $h_j$ is an $x_i$ or $y_i$ as appropriate). Hence, $\out_{H/N}(\overline{G})\lneq\out^{H/N}(\overline{G})$, a contradiction.
\end{proof}

The only difference between the conditions of Lemma \ref{lem:BaumslagTretkoffGeneral} and those of Baumslag--Tretkoff is that we require the finite-index subgroup $N$ to be characteristic rather than normal. Suppose, as with Baumslag--Tretkoff, we instead assume that $N$ is normal in $H$. Then, as $H$ is finitely generated, there exists a characteristic subgroup $\widehat{N}\leq_fN$ which satisfies the first condition. However, it is not clear that $\widehat{N}$ is such that $\eta(K\cap\widehat{N})\leq \widehat{N}$.

Now, if $G=H\ast_{(K, \phi)}$ is automorphism-induced then the characteristic subgroup $\widehat{N}$ also satisfies the second condition, as for example $\phi(K\cap\widehat{N})=\phi(K)\cap\phi(\widehat{N})\leq \phi(\widehat{N})=\widehat{N}$. This observation gives the following lemma, which corresponds to Condition (\ref{item:PettetEqualBJCondition4}) of Lemma \ref{lem:MirrorPettet} (also compare with \cite[Lemma 4.4]{BaumslagTretkoff}). A subgroup $K$ of $H$ is \emph{residually separable} in $H$ if for all $x\in H\setminus K$ there exists a finite index, normal subgroup $N$ of $H$, written $N\lhd_f H$, such that $x\not\in KN$.

\begin{lemma}
\label{lem:BaumslagTretkoffAutInd}
Let $G=H\ast_{(K, \phi)}$, and let $H$ be finitely generated and residually finite. If $K$ is residually separable in $H$ then $\out_H(G)=\out^H(G)$.
\end{lemma}

\begin{proof}
As $K$ is residually separable in $H$ the conditions of Lemma \ref{lem:BaumslagTretkoffGeneral} hold. The result follows.
\end{proof}

An alternative proof of Lemma \ref{lem:BaumslagTretkoffAutInd} is as follows: By combining Lemma \ref{lem:LevEqualPet} with a result of Shirvani \cite[Lemma 3]{shirvani1985residually}, we see that the conditions of Lemma \ref{lem:BaumslagTretkoffAutInd} imply that $\phi(K)=\gamma_a(K)$ for some $a\in H$. The result follows from Condition (\ref{item:PettetEqualBJCondition2}) of Lemma \ref{lem:MirrorPettet}, below (the proof of this condition is independent of the proof of Condition (\ref{item:PettetEqualBJCondition4})).

\p{Proof of Lemma \ref{lem:MirrorPettet}}
Lemma \ref{lem:MirrorPettet} combines Lemma \ref{lem:LevEqualPet} and Lemma \ref{lem:BaumslagTretkoffAutInd} with results of Pettet.

\begin{lemma}
\label{lem:MirrorPettet}
Let $G=H\ast_{(K, \phi)}$.
Suppose that one of the following holds:
\begin{enumerate}
\item\label{item:PettetEqualBJCondition1} $K$ is conjugacy maximal in $H$;
\item\label{item:PettetEqualBJCondition2} $\phi(K)=\gamma_a(K)$ for some $a\in H$; or
\item\label{item:PettetEqualBJCondition3} There does not exist any $b\in H$ such that $\phi(K)\lneq \gamma_b(K)$ and there does not exist any $c\in H$ such that $\gamma_c(K)\lneq \phi(K)$; or
\item\label{item:PettetEqualBJCondition4} $H$ is finitely generated and residually finite, and $K$ is residually separable in $H$; or
\item\label{item:PettetEqualBJCondition5} $H$ is finitely generated and $G$ is residually finite.
\end{enumerate}
Then $\out_H(G)=\out^H(G)$.
\end{lemma}

\begin{proof}
If (\ref{item:PettetEqualBJCondition1}) or (\ref{item:PettetEqualBJCondition2}) holds then the result is known to hold  \cite[Lemma 2.6 and Theorem 1]{pettet1999automorphism}.
Now, note that it is impossible for (\ref{item:PettetEqualBJCondition3}) to hold simultaneously to the inequality $\out_H(G)\lneq\out(G)$: if both held then (\ref{item:PettetEqualBJCondition2}) would hold, by Lemma \ref{lem:LevEqualPet}, and so we obtain $\out_H(G)=\out(G)$, a contradiction. Hence, if (\ref{item:PettetEqualBJCondition3}) holds then the result holds.

If (\ref{item:PettetEqualBJCondition4}) holds then the result holds by Lemma \ref{lem:BaumslagTretkoffAutInd}. If (\ref{item:PettetEqualBJCondition5}) holds then (\ref{item:PettetEqualBJCondition4}) holds \cite[Theorem A]{Logan2017Residual}. The result follows.
\end{proof}

\subsection{The Bass--Jiang group versus \boldmath{$\out(G)$}}
Theorem \ref{thm:equality} now gives conditions implying that the Bass--Jiang group is the full outer automorphism group.

\begin{theorem}
\label{thm:equality}
Let $G\cong \langle H, t; tkt^{-1}=\phi(k), k\in K\rangle$, $K\lneq H$ and $\phi\in\aut(H)$, be an automorphism-induced $HNN$-extension.
If $H$ has Serre's property FA then $\out^H(G)=\out(G)$. If any one of the following statements holds then $\out_H(G)=\out^H(G)$:
\begin{enumerate}
\item\label{item:PettetEqualBJCondition1} $K$ is conjugacy maximal in $H$;
\item\label{item:PettetEqualBJCondition2} $\phi(K)=\gamma_a(K)$ for some $a\in H$; or
\item\label{item:PettetEqualBJCondition3} There does not exist any $b\in H$ such that $\phi(K)\lneq \gamma_b(K)$ and there does not exist any $c\in H$ such that $\gamma_c(K)\lneq \phi(K)$; or
\item\label{item:PettetEqualBJCondition4} $H$ is finitely generated and residually finite, and $K$ is residually separable in $H$; or
\item\label{item:PettetEqualBJCondition5} $H$ is finitely generated and $G$ is residually finite.
\end{enumerate}
\end{theorem}

\begin{proof}
Theorem \ref{thm:equality} follows immediately from Lemmas~\ref{lem:HhasFA}~and~\ref{lem:MirrorPettet}.
\end{proof}

Note that Theorem \ref{thm:equality} obtains $\out_H(G)=\out(G)$ in two disjoint steps; it proves that $\out^H(G)=\out(G)$ and separately that $\out_H(G)=\out^H(G)$. M. Pettet proved a similar result to Theorem \ref{thm:equality} involving Conditions~(\ref{item:PettetEqualBJCondition1}) and (\ref{item:PettetEqualBJCondition2}) \cite[Theorem 1]{pettet1999automorphism}, but he does not obtain these disjoint steps due to his more general setting.

Theorem \ref{thm:equality} gives a concrete method of proving that an automorphism-induced HNN-extension $G=H\ast_{(K, \phi)}$ is not residually finite: if $H$ is finitely generated and if there exists some $\psi\in\aut^H(G)\setminus\aut_H(G)$ (that is, $\out_H(G)\lneq\out^H(G)$) then $G$ is not residually finite. Lemma \ref{lem:BassJiangProperSubgp}, below, provides a way of proving this inequality.

\subsection{A condition implying \boldmath{$\out_H(G)\lneq\out^H(G)$}}
We do not know if the conditions of Lemma \ref{lem:MirrorPettet} are both necessary and sufficient for $\out_H(G)=\out^H(G)$ to hold, and reading closely the proof of Lemma \ref{lem:LevEqualPet} one might suspect that they are. Lemma \ref{lem:BassJiangProperSubgp} now demonstrates how close the conditions of Lemma \ref{lem:MirrorPettet} are to being necessary and sufficient.

Lemma \ref{lem:BassJiangProperSubgp} demonstrates this by proving that if Conditions~(\ref{item:PettetEqualBJCondition1})~and~(\ref{item:PettetEqualBJCondition3}) of Lemma \ref{lem:MirrorPettet} fail ``compatibly'' with one another then $\out_H(G)\lneq\out^H(G)$. Note that these two conditions fail for $G=H\ast_{(K, \phi)}$ if and only if, after replacing $\phi$ with $\phi^{-1}$ if necessary, there exist some $a, b\in H$ such that $\gamma_a(K)\lneq K\lneq \phi\gamma_b(K)$. The ``compatibility'' we require is that we can take $a\in \phi\gamma_b(K)$.

\begin{lemma}
\label{lem:BassJiangProperSubgp}
Let $G=H\ast_{(K, \phi)}$. Suppose also that $K^{a}\lneq K\lneq \phi\gamma_b(K)$ for some $a\in \phi\gamma_b(K)$, $b\in H$. Then $\out_H(G)\lneq\out^H(G)$.
\end{lemma}

\begin{proof}
By replacing $\phi$ with $\phi\gamma_b$, we may assume $b=1$.
Note that $a\not\in K$ as $\gamma_a(K)\lneq K$, and so the word $t^2at^{-1}\phi(a)^{-1}$ is $t$-reduced. We prove that the map
\begin{align*}\psi: h&\mapsto h&\forall h\in H\\
t&\mapsto t^2at^{-1}\phi(a)^{-1}\end{align*}
is an automorphism of $G$; then $\psi\in\aut^H(G)\setminus\aut_H(G)$ and the result follows.

Now, $\phi(a)^{-1}k\phi(a)\in\phi(K)$ so $\psi: G\rightarrow G$ is a homomorphism. To see that $\psi$ is a surjection we prove that $t\in\im(\psi)$. Begin with the following, where $a_1=\phi^{-1}(a)\in K$. Note that $a^{-1}a_1^{-1}a\in K$.
\begin{align*}
(t^2at^{-1}\phi(a)^{-1})a^{-1}
&=t^2at^{-1}\phi(a^{-1}a_1^{-1}aa^{-1})\\
&=t^2a_1^{-1}at^{-1}\phi(a^{-1})\\
&=ta^{-1}tat^{-1}\phi(a^{-1})
\end{align*}
We then have that:
\begin{align*}
&(t^2at^{-1}\phi(a)^{-1})\phi(a)\left[(t^2at^{-1}\phi(a)^{-1})a^{-1}\right](t^2at^{-1}\phi(a)^{-1})^{-1}\\
&=(t^2at^{-1}\phi(a)^{-1})\phi(a)\left[ta^{-1}tat^{-1}\phi(a)^{-1}\right](t^2at^{-1}\phi(a)^{-1})^{-1}\\
&=(t^3at^{-1}\phi(a)^{-1})(t^2at^{-1}\phi(a)^{-1})^{-1}\\
&=t
\end{align*}
Therefore, $\psi$ is surjective. It is injective because it is invertible, with inverse $\psi^{-1}: h\mapsto h$ for all $h\in H$, $t\mapsto t\phi(a)ta^{-1}t^{-1}$.
\end{proof}


\section{Describing the Bass--Jiang group}
\label{sec:description}
In this section we prove Theorem \ref{thm:description}, which gives a short-exact sequence description of the Bass--Jiang group of $G=H\ast_{(K, \phi)}$. Combined with Theorem \ref{thm:equality}, it allows us to give a description of $\out(G)$.

Bass--Jiang gave a decomposition of the group $\out_H(G)$ for $G$ a general HNN-extension \cite{Bass1996automorphism}. Their description took the form of a filtration. However, their description contains both of the associated subgroups $K$ and $\phi(K)$, while our description in Theorem \ref{thm:description} only contains $K$. The difference is in the description of the kernel $\out_H^{(V)}(G)$ from the short exact sequence in Theorem \ref{thm:description} ($\out_H^{(V)}(G)$ is formally defined below). Specifically, Bass--Jiang find some $N\unlhd\out_H^{(V)}(G)$ such that
\[
\frac{\out_H^{(V)}(G)}{N}\cong\frac{\left(N_H(\phi(K))\cap N_H(K)\right)/Z(H)}{Z(H)K}.
\]
In general, $N_H(\phi(K))\cap N_H(K)\neq N_H(K)$. Therefore Bass--Jiang's decomposition of $\out_H^{(V)}(G)$ is different from the decomposition in Theorem \ref{thm:description}. Our simpler description of $\out_H^{(V)}(G)$ is fundamental to the framework described in Section \ref{sec:applications} and is essential in proving Theorem \ref{thm:BWfinitelypresented}.

\p{Results of Bass--Jiang}
Our proof of Theorem \ref{thm:description} is based on two results of Bass--Jiang, which we state now. We have translated their results and definitions into our setting of automorphism-induced HNN-extensions. The first result is a classification of the elements of the Bass--Jiang group $\out_H(G)$ \cite[Theorem 5.6]{Bass1996automorphism}.

\begin{theorem}[Bass--Jiang]
\label{thm:principle}
Let $G=H\ast_{(K, \phi)}$. If ${\delta}\in \aut(H)$, $a\in H$ are such that ${\delta}(K)=K$ and ${\delta}\phi(k)=\phi{\delta}\gamma_a(k)$ for all $k\in K$ then the map
\begin{align*}\alpha_{(\delta, a)}: h&\mapsto {\delta}(h)&\forall h\in H\\
t&\mapsto at \end{align*}
is an automorphism of $G$.

If $\zeta\in\aut(H)$, $b\in H$ are such that $\zeta(K)=\phi(K)$, $\zeta^2\gamma_b(K)=K$, and $\zeta\phi^{-1}(k)=\phi\zeta\gamma_b(k)$ for all $k\in K$ then the map
\begin{align*}\beta_{(\zeta, b)}: h&\mapsto \zeta(h)&\forall h\in H\\
t&\mapsto bt^{-1} \end{align*}
is an automorphism of $G$.

Moreover, every element $\widehat{\psi}$ of $\out_H(G)$ has a representative in $\aut_H(G)$ of the form $\alpha_{(\delta, a)}$ or of the form $\beta_{(\zeta, b)}$.
\end{theorem}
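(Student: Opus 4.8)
The plan is to prove the two ``forward'' constructions by direct verification and to deduce the final clause from the very definition of $\out_H(G)$ together with Britton's Lemma; this is essentially \cite[Theorem~5.6]{Bass1996automorphism} rephrased, and the one thing to keep constantly in mind is the left-to-right composition convention, under which the hypothesis $\delta\phi(k)=\phi\delta\gamma_a(k)$ reads $\phi(\delta(k))=a^{-1}\delta(\phi(k))a$ and $\zeta\phi^{-1}(k)=\phi\zeta\gamma_b(k)$ reads $\phi^{-1}(\zeta(k))=b^{-1}\zeta(\phi(k))b$.

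For the first statement I would check, in order, that $\alpha_{(\delta,a)}$ is a homomorphism, is surjective, and is injective. It respects the relators $tkt^{-1}\phi(k)^{-1}$ ($k\in K$) precisely because $\delta(K)=K$ lets us pinch $t\delta(k)t^{-1}=\phi(\delta(k))$, after which the requirement $(at)\delta(k)(at)^{-1}=\delta(\phi(k))$ is exactly the rephrased hypothesis; so $\alpha_{(\delta,a)}$ is an endomorphism. It is onto since $H=\delta(H)$ lies in the image and $t=\alpha_{(\delta,a)}\!\left(\delta^{-1}(a^{-1})\,t\right)$. For injectivity I would exhibit the two-sided inverse $\alpha_{(\delta^{-1},\delta^{-1}(a^{-1}))}$: one checks this pair again satisfies the hypotheses (using $\delta(K)=K$ and its consequence $\delta(\phi(K))=a\phi(K)a^{-1}$), so it too is an endomorphism, and then that both composites fix the generating set $H\cup\{t\}$ pointwise. (Alternatively, $\alpha_{(\delta,a)}$ sends a $t$-reduced word to a word with the same sequence of $t$-exponents and, by the same two facts, creates no new pinches, so nontrivial elements have nontrivial images.) The second statement is proved identically with $t$ replaced by $t^{-1}$: now $\zeta(K)=\phi(K)$ lets us pinch $t^{-1}\zeta(k)t=\phi^{-1}(\zeta(k))$, reducing the relation check to the rephrased hypothesis; surjectivity uses $t^{-1}=\beta_{(\zeta,b)}\!\left(\zeta^{-1}(b^{-1})\,t\right)$; and injectivity goes through as before, the relevant auxiliary facts being $\zeta(K)=\phi(K)$ and $\zeta(\phi(K))=bKb^{-1}$, the latter extracted from $\zeta^2\gamma_b(K)=K$. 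Note $\beta_{(\zeta,b)}$ reverses every $t$-exponent, but the same computation shows $t$-reducedness is preserved.

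For the ``moreover'' clause, the existence of a representative $\psi$ with $\psi(H)=H$ and $\psi(t)=gt^{\epsilon}$ is the definition of $\out_H(G)$; put $\delta:=\psi|_H\in\aut(H)$. Then $\psi$ agrees on $H\cup\{t\}$ with $\alpha_{(\delta,g)}$ if $\epsilon=1$ and with $\beta_{(\delta,g)}$ if $\epsilon=-1$, so it remains only to recover the subgroup and compatibility conditions on $(\delta,g)$. I would do this by pushing the defining relation $tkt^{-1}=\phi(k)$ and the inverse relation $t^{-1}\phi(k)t=k$ through $\psi$ and through $\psi^{-1}$ (noting $\psi^{-1}(H)=H$ with $\psi^{-1}(t)=\delta^{-1}(g^{-1})\,t$ when $\epsilon=1$ and $\psi^{-1}(t)=t^{-1}\delta^{-1}(g)$ when $\epsilon=-1$) and invoking Britton's Lemma each time to see which element of $H$ gets pinched. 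When $\epsilon=1$: $gt\delta(k)t^{-1}g^{-1}=\delta(\phi(k))\in H$ forces $\delta(k)\in K$, so $\delta(K)\leq K$, and the same run through $\psi^{-1}$ gives $\delta^{-1}(K)\leq K$, whence $\delta(K)=K$; carrying out the pinch then leaves exactly $g\phi(\delta(k))g^{-1}=\delta(\phi(k))$, the compatibility identity with $a=g$. When $\epsilon=-1$: the four runs yield $\delta(K)\leq\phi(K)$, $\phi(K)\leq\delta(K)$, $\delta(\phi(K))\leq gKg^{-1}$ and $gKg^{-1}\leq\delta(\phi(K))$, which assemble into $\zeta(K)=\phi(K)$ and $\zeta^2\gamma_b(K)=K$ with $\zeta=\delta$, $b=g$; the surviving pinched equation is $g\phi^{-1}(\delta(k))g^{-1}=\delta(\phi(k))$, i.e.\ $\zeta\phi^{-1}(k)=\phi\zeta\gamma_b(k)$.

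\textbf{The main obstacle} is precisely this last extraction step: one must track carefully which $H$-element is pinched in each Britton reduction, and then combine the (up to four) one-sided inclusions into the sharp equalities $\delta(K)=K$, $\zeta(K)=\phi(K)$, $\zeta^2\gamma_b(K)=K$; the orientation-reversing case genuinely requires all four reductions, whereas the orientation-preserving case needs only two. Everything else reduces to bookkeeping once the composition convention is held fixed.
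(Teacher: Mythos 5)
The paper does not prove this statement: it imports it wholesale as \cite[Theorem~5.6]{Bass1996automorphism}, translated into the automorphism-induced setting, so there is no in-paper proof to compare against. Your self-contained argument is correct. You fix the paper's left-to-right composition convention and correctly unwind $\delta\phi(k)=\phi\delta\gamma_a(k)$ to $\phi(\delta(k))=a^{-1}\delta(\phi(k))a$ and $\zeta\phi^{-1}(k)=\phi\zeta\gamma_b(k)$ to $\phi^{-1}(\zeta(k))=b^{-1}\zeta(\phi(k))b$. The relator check for $\alpha_{(\delta,a)}$ (resp.\ $\beta_{(\zeta,b)}$) reduces, after the Britton pinch enabled by $\delta(K)=K$ (resp.\ $\zeta(K)=\phi(K)$), to exactly this rephrased hypothesis; surjectivity is immediate from the displayed preimages of $t$ (resp.\ $t^{-1}$); and injectivity via preservation of $t$-reduced form is sound once one has the derived identities $\delta(\phi(K))=a\phi(K)a^{-1}$ and $\zeta(\phi(K))=bKb^{-1}$, which you correctly extract from the hypotheses. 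For the ``moreover'' clause, pushing $tkt^{-1}=\phi(k)$ and its inverse relation through $\psi$ and $\psi^{-1}$ and reading off which $H$-element must be pinched is the right mechanism; your accounting of the two (orientation-preserving) versus four (orientation-reversing) one-sided inclusions, and their assembly into $\delta(K)=K$, $\zeta(K)=\phi(K)$, $\zeta^2\gamma_b(K)=K$, together with the surviving pinched equality, is correct. The only step you elide is verifying that $\psi^{-1}(H)=H$ with $\psi^{-1}(t)=\delta^{-1}(g^{-1})t$ or $t^{-1}\zeta^{-1}(g)$; this is an easy direct check (apply $\psi$ to the claimed preimage), so it is not a gap, but it would be worth stating since those two facts drive the $\psi^{-1}$ runs.
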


The second result of Bass--Jiang which we apply requires certain definitions. The restrictions on $a$ and $b$ in the second, equivalent definition of $\out_H^{(V)}(G)$ follow from Theorem \ref{thm:principle}:
\begin{align*}
\out_H^0(G):=\{&\widehat{\alpha}_{(\delta, a)}: a\in H, {\delta}(K)=K, {\delta}\phi(k)=\phi{\delta}\gamma_a(k) \text{ for all }k\in K\},\\
\out_H^{(V)}(G):=\{&{\widehat{\alpha}_{(\gamma_b, a)}}: a\in H, \gamma_b\in\inn(H)\}\\
=\{&{\widehat{\alpha}_{(\gamma_b, a)}}: a\in H, b\in N_H(K), ba\phi(b)^{-1}\in C_H(\phi(K))\}.
\end{align*}
The subgroup $\out_H^0(G)$ clearly has index two in $\out_H(G)$ if there exists some automorphism $\beta_{(\zeta, b)}$ of $G$, and index one otherwise.

The second result of Bass--Jiang which we apply is the initial two steps of their filtration decomposition of $\out_H(G)$ \cite[Theorem 8.1\footnote{Section 8.5 of Bass--Jiang's paper is a restriction of their Theorem 8.1 to HNN-extensions. We cite their more general Theorem 8.1 because certain errors have crept in to the re-statement in Section 8.5.}]{Bass1996automorphism}.
Recall that the subgroup $A_{(K, \phi)}\leq\aut(H)$ was defined in the introduction.

\begin{theorem}[Bass--Jiang]
\label{thm:BassJiangSec85}
Let $G=H\ast_{(K, \phi)}$. The surjection
\begin{align*}
\chi_1:
\out_H^0(G)&\rightarrow \frac{A_{(K, \phi)}\inn(H)}{\inn(H)}\\
\widehat{\alpha}_{(\delta, a)}&\mapsto\widehat{\delta}
\end{align*}
has kernel $\out_H^{(V)}(G)$, and either $\out_H^0(G)=\out_H(G)$ or there exist some $\zeta\in\aut(H)$ and some $b\in H$ such that $\zeta(K)=\phi(K)$, $\zeta^2\gamma_b(K)=K$, and $\zeta\phi^{-1}(k)=\phi\zeta\gamma_b(k)$ for all $k\in K$, whence $\out_H^0(G)$ has index two in $\out_H(G)$.
\end{theorem}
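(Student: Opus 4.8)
The plan is to derive the theorem from Theorem~\ref{thm:principle} together with one structural fact, namely that the base group is self-normalising: $N_G(H)=H$. I would prove this first, using Britton's Lemma. Suppose $g\in N_G(H)$ and write $g=h_0w$ with $h_0\in H$ and $w$ a $t$-reduced word $t^{\epsilon_1}h_1t^{\epsilon_2}\cdots t^{\epsilon_n}h_n$; assume for a contradiction that $n\geq 1$. For every $h\in H$ we have $g^{-1}hg=w^{-1}(h_0^{-1}hh_0)w\in H$, and as $h$ runs over $H$ so does $h':=h_0^{-1}hh_0$ (because $h_0\in H$), so $w^{-1}h'w\in H$ for all $h'\in H$. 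But if $h'$ lies outside the associated subgroup $A$ (with $A=\phi(K)$ when $\epsilon_1=1$ and $A=K$ when $\epsilon_1=-1$) then $w^{-1}h'w$ is $t$-reduced of $t$-length $2n>0$, hence not in $H$. So one would need $H\leq A$, contradicting $K\lneq H$ (and hence $\phi(K)\lneq\phi(H)=H$). Therefore $n=0$ and $g\in H$. This is the same mechanism as in the proof of Lemma~\ref{lem:HhasFA}, but here no FA hypothesis is needed because the relevant element of $G$ already conjugates $H$ to $H$.

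Next I would check that $\chi_1$ is a well-defined surjective homomorphism. If $\widehat{\alpha}_{(\delta,a)}\in\out_H^0(G)$ then, by the defining conditions of $\out_H^0(G)$, $\delta(K)=K$ and $\delta\phi(k)=\phi\delta\gamma_a(k)$ for all $k\in K$, which is precisely the statement $\delta\in A_K$; so $\widehat{\delta}$ is a genuine element of $A_K\inn(H)/\inn(H)$. For independence of the representative, first note that the $\alpha$-type automorphisms are closed under composition and inversion (on representatives, $\alpha_{(\delta_1,a_1)}\alpha_{(\delta_2,a_2)}$ maps $H$ to $H$ and $t$ into $Ht$, so it is $\alpha_{(\delta_1\delta_2,\,\delta_2(a_1)a_2)}$, and similarly for inverses), so $\out_H^0(G)$ is a genuine subgroup of $\out_H(G)$; indeed $\out_H^0(G)$ is exactly the set of outer classes admitting an $\alpha$-type representative. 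If now $\alpha_{(\delta,a)}$ and $\alpha_{(\delta',a')}$ represent the same outer class, they differ by an inner automorphism of $G$; restricting to $H$ shows this inner automorphism carries $H$ to $H$, so by $N_G(H)=H$ it is conjugation by an element of $H$, whence $\delta'$ differs from $\delta$ by an element of $\inn(H)$, i.e. $\widehat{\delta'}=\widehat{\delta}$. The homomorphism property then follows from $\chi_1(\widehat{\alpha}_{(\delta_1,a_1)}\widehat{\alpha}_{(\delta_2,a_2)})=\widehat{\delta_1\delta_2}=\widehat{\delta_1}\,\widehat{\delta_2}$, and surjectivity from the first half of Theorem~\ref{thm:principle}: for $\delta\in A_K$ choose $a\in H$ with $\delta\phi(k)=\phi\delta\gamma_a(k)$ for all $k\in K$; then $\alpha_{(\delta,a)}$ is an automorphism lying in $\aut_H(G)$, so $\widehat{\alpha}_{(\delta,a)}\in\out_H^0(G)$ is mapped to $\widehat{\delta}$.

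The kernel is then read off directly: $\widehat{\alpha}_{(\delta,a)}\in\ker\chi_1$ iff $\widehat{\delta}$ is trivial in $\aut(H)/\inn(H)$ iff $\delta\in\inn(H)$, i.e. $\delta=\gamma_b$ for some $b\in H$; comparing with the definition $\out_H^{(V)}(G)=\{\widehat{\alpha}_{(\gamma_b,a)}:a\in H,\ \gamma_b\in\inn(H)\}$ gives $\ker\chi_1=\out_H^{(V)}(G)$. For the index statement I would use the homomorphism $\varphi\colon G\to\mathbb{Z}$ with $\varphi(H)=0$, $\varphi(t)=1$ (well-defined, since every defining relator maps to $0$): for $\widehat{\psi}\in\out_H(G)$ the integer $\varphi(\psi(t))$ is independent of the representative $\psi$ (as $\varphi$ factors through the abelianisation of $G$), equals $\pm1$ on a representative witnessing membership in $\out_H(G)$, and is multiplicative under composition, so $\widehat{\psi}\mapsto\varphi(\psi(t))$ is a homomorphism $\out_H(G)\to\{\pm1\}$ with kernel the set of classes having an $\alpha$-type representative, namely $\out_H^0(G)$. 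Hence $\out_H^0(G)$ has index $1$ or $2$ in $\out_H(G)$, with index $2$ exactly when some class has a $\beta$-type representative, which by the ``moreover'' clause of Theorem~\ref{thm:principle} happens precisely when there exist $\zeta\in\aut(H)$ and $b\in H$ with $\zeta(K)=\phi(K)$, $\zeta^2\gamma_b(K)=K$ and $\zeta\phi^{-1}(k)=\phi\zeta\gamma_b(k)$ for all $k\in K$. This is the asserted dichotomy.

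The only step with genuine content is the self-normalising fact $N_G(H)=H$; everything else is bookkeeping against the definitions of $\out_H^0(G)$, $\out_H^{(V)}(G)$ and $A_K$ and against Theorem~\ref{thm:principle}. One point deserving a little care en route is the verification that $\out_H^0(G)$ is closed under the group operations (so that ``index two'' is meaningful and $\chi_1$ is defined on products); this falls out of the same computation with $\varphi$ used for the index dichotomy.
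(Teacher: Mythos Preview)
The paper does not prove this theorem; it is quoted from \cite[Theorem~8.1]{Bass1996automorphism} and used as a black box. Your argument is correct and supplies a self-contained derivation from Theorem~\ref{thm:principle}.

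The two ingredients you isolate are exactly what is needed. The self-normalising fact $N_G(H)=H$ (via Britton's Lemma, using $K\lneq H$) guarantees that $\chi_1$ is well-defined: any inner automorphism relating two $\alpha$-type representatives must be conjugation by an element of $H$, so the induced outer class $\widehat{\delta}$ in $\aut(H)/\inn(H)$ is unambiguous. The $t$-exponent homomorphism $\varphi\colon G\to\mathbb{Z}$ then separates $\alpha$-type from $\beta$-type representatives cleanly and gives the index-at-most-two statement without further work; in particular it shows the same outer class cannot admit both an $\alpha$-type and a $\beta$-type representative, so $\out_H^0(G)$ really is the kernel of the sign map. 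Your verification that $\out_H^0(G)$ is closed under products and inverses (needed before one can speak of its index) is handled correctly by the composition formula for $\alpha$-maps.

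Compared with citing the general graph-of-groups filtration of Bass--Jiang, your route is more elementary and stays entirely within the automorphism-induced HNN setting. It also dovetails with the paper's own toolkit: the Britton's Lemma argument you give for $N_G(H)=H$ is the same mechanism underlying Lemma~\ref{lem:conjbyFk}, which the paper uses repeatedly when analysing $\out_H^{(V)}(G)$.
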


\p{The subgroup \boldmath{$\out_H^{(V)}(G)$} of \boldmath{$\out_H^0(G)$}}
We wish to describe the kernel $\out_H^{(V)}(G)$ from Theorem \ref{thm:BassJiangSec85}; combining the description we obtain with Theorem \ref{thm:BassJiangSec85} proves the main result of this section, Theorem \ref{thm:description}. Our description is given by Lemmas \ref{lem:kernelsemidirect} and \ref{lem:BKisom}. These lemmas both mention the subgroup $C_K$ of $\out_H^{(V)}(G)$:
\begin{align*}
C_K
:=&\{{\widehat{\alpha}_{(1, a)}}: a\in H\}\\
=&\{{\widehat{\alpha}_{(1, a)}}: a\in C_H(\phi(K))\}
\end{align*}
Lemmas \ref{lem:kernelsemidirect} and \ref{lem:BKisom} also both apply the following technical result.

\begin{lemma}\label{lem:conjbyFk}
Let $G=H\ast_{(K, \phi)}$, and let $g\in G$ and $x, y \in H$. Suppose that both of the following hold.
\begin{align}
g^{-1}ytg&=x^{-1}\phi(x)t\label{equality:conjbyFk1}\\
g^{-1}hg&=x^{-1}hx &\forall \:h\in H\label{equality:conjbyFk2}
\end{align}
Then $g\in K$.
\end{lemma}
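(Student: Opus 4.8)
The plan is to read the conclusion off the two hypotheses in two stages, working purely algebraically with $t$-reduced words and Britton's Lemma: first, the centraliser condition~(\ref{equality:conjbyFk2}) forces $g\in H$, and then, once $g\in H$ is known, the conjugacy relation~(\ref{equality:conjbyFk1}) forces $g\in K$. The role of the standing hypothesis $K\lneq H$ (i.e.\ that $G=H\ast_{(K,\phi)}$ is a genuine automorphism-induced HNN-extension, not a mapping torus) is isolated entirely in the first stage.

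\emph{Step 1: $g\in H$.} From~(\ref{equality:conjbyFk2}) one computes, for every $h\in H$,
$(gx^{-1})^{-1}h(gx^{-1})=x(g^{-1}hg)x^{-1}=x(x^{-1}hx)x^{-1}=h$, so $gx^{-1}\in C_G(H)$. I would then show $C_G(H)=Z(H)$, whence $g=(gx^{-1})x\in H$. To see $C_G(H)=Z(H)$, suppose $w\in C_G(H)$ has a $t$-reduced form $h_0t^{\epsilon_1}h_1\cdots t^{\epsilon_n}h_n$ with $n\geq 1$. Then for each $h\in H$ the word $h_0t^{\epsilon_1}h_1\cdots t^{\epsilon_n}(h_nhh_n^{-1})t^{-\epsilon_n}\cdots t^{-\epsilon_1}h_0^{-1}$ represents $whw^{-1}=h\in H$, so by Britton's Lemma it is not $t$-reduced. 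Because $w$ itself is $t$-reduced, none of the subwords $t^{\epsilon_i}h_it^{\epsilon_{i+1}}$ (coming from $w$) nor $t^{-\epsilon_{i+1}}h_i^{-1}t^{-\epsilon_i}$ (coming from $w^{-1}$) can be of the form $tkt^{-1}$ with $k\in K$ or $t^{-1}k't$ with $k'\in\phi(K)$, so the offending subword must be the central one $t^{\epsilon_n}(h_nhh_n^{-1})t^{-\epsilon_n}$. Hence $h_nhh_n^{-1}$ lies in $K$ (if $\epsilon_n=1$) or in $\phi(K)$ (if $\epsilon_n=-1$), and this holds for \emph{every} $h\in H$; conjugating back by $h_n\in H$ gives $H\leq K$ or $H\leq\phi(K)$, contradicting $K\lneq H$ (and $\phi(K)\lneq H$, which holds as $\phi\in\aut(H)$). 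So $n=0$, $w\in H$, and therefore $w\in C_H(H)=Z(H)$.

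\emph{Step 2: $g\in K$.} With $g\in H$ in hand, rearrange~(\ref{equality:conjbyFk1}) by multiplying on the right by $t^{-1}$ and on the left by $y^{-1}g$, obtaining $tgt^{-1}=y^{-1}g\,x^{-1}\phi(x)$, whose right-hand side lies in $H$. Thus the word $t\cdot g\cdot t^{-1}$ represents an element of $H$ while containing a letter $t$, so by Britton's Lemma it is not $t$-reduced; the only subword of the forbidden form it could contain is $tgt^{-1}$ itself, which forces $g\in K$. (As a by-product, $\phi(g)=y^{-1}g\,x^{-1}\phi(x)$.)

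\emph{Expected main obstacle.} Everything here is routine bookkeeping with $t$-reduced words except the identity $C_G(H)=Z(H)$ in Step~1 --- the statement that no element genuinely involving the stable letter $t$ can centralise all of $H$. This is exactly where $K\lneq H$ is indispensable: the claim fails for mapping tori ($K=H$), matching the footnote in the introduction about the quite different behaviour of the Bass--Jiang group in that case. Once Step~1 is in place, Step~2 is immediate, and neither the specific element $y$ nor the specific form $x^{-1}\phi(x)$ of the $H$-part of the right-hand side of~(\ref{equality:conjbyFk1}) plays any role beyond bookkeeping.
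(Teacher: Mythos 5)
Your proof is correct and takes essentially the same route as the paper: both use Britton's Lemma together with the strict inclusion $K\lneq H$ (and hence $\phi(K)\lneq H$) to extract $g\in H$ from the centralising condition~(\ref{equality:conjbyFk2}), and then read $g\in K$ off~(\ref{equality:conjbyFk1}). Your Step~1 packages the key observation as the subsidiary identity $C_G(H)=Z(H)$ and derives the contradiction $H\leq K$ by letting $h$ range over all of $H$, whereas the paper argues directly by choosing one $h$ for which $g^{-1}hg$ is a $t$-reduced word of positive $t$-length; these are the same Britton's-Lemma argument run in contrapositive.
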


\begin{proof}
Begin by writing $g$ as a $t$-reduced word $zW$, where $z\in H$ and $W$ is either empty or begins with $t^{\pm1}$. Assume that $W$ is non-empty. Now, (\ref{equality:conjbyFk2}) has the form $W^{-1}z^{-1}hzW=x^{-1}hx$. As $K\lneq H$, $h$ can be chosen such that the word $W^{-1}z^{-1}hzW$ is $t$-reduced ($z^{-1}hz$ not in $K$ or $\phi(K)$ as appropriate), a contradiction by Britton's Lemma \cite[Section IV.2]{L-S}. Hence, $W$ is empty. So $g=z\in H$. It then follows from (\ref{equality:conjbyFk1}) that $g\in K$, as required.
\end{proof}

Our proofs of Lemmas~\ref{lem:kernelsemidirect}~and~\ref{lem:BKisom}, and of certain proofs in Section~\ref{sec:commutativity}, require calculations with automorphisms of the form $\alpha_{(\delta, a)}$.
We note the following identities:
\begin{align*}
\alpha_{(\delta_1, a_1)}\alpha_{(\delta_2, a_2)}&=\alpha_{(\delta_1\delta_2, \delta_2(a_1)a_2)}\\
\alpha_{(\delta, a)}^{-1}&=\alpha_{(\delta^{-1}, \delta^{-1}(a^{-1}))}
\end{align*}
Certain calculations are written in terms of cosets $\widehat{\alpha}_{(\delta, a)}$, but these follow immediately from the above two identities.

\begin{lemma}\label{lem:kernelsemidirect}
The map
\begin{align*}
\chi_2: \out_H^{(V)}(G)&\rightarrow N_H(K)/Z(H)K\\
\widehat{\alpha}_{(\gamma_b, a)}&\mapsto bZ(H)K
\end{align*}
is a well-defined surjective homomorphism with kernel $C_K$.
\end{lemma}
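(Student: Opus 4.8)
The plan is to verify directly that $\chi_2$ is well-defined, a homomorphism, surjective, and has kernel exactly $C_K$, using the two identities for composing and inverting automorphisms of the form $\alpha_{(\delta,a)}$ together with the alternative description of $\out_H^{(V)}(G)$ recorded before the statement, namely that $\widehat{\alpha}_{(\gamma_b,a)}\in\out_H^{(V)}(G)$ forces $b\in N_H(K)$ and $ba\phi(b)^{-1}\in C_H(\phi(K))$. First I would check that the assignment $\widehat{\alpha}_{(\gamma_b,a)}\mapsto bZ(H)K$ does not depend on the choice of representative. An element $\widehat{\alpha}_{(\gamma_b,a)}$ equals $\widehat{\alpha}_{(\gamma_{b'},a')}$ in $\out_H(G)$ precisely when the two automorphisms differ by an inner automorphism of $G$; unwinding this through Theorem~\ref{thm:principle}, conjugating $\alpha_{(\gamma_b,a)}$ by $\gamma_g$ ($g\in G$) produces another automorphism of this shape, and I would show the resulting change in the $H$-part of the ``$t$-coefficient'' lands in $Z(H)K$ — this is where Lemma~\ref{lem:conjbyFk} does its work, pinning the relevant $g$ down to lie in $K$ so that $b$ changes only by an element of $K$, and separately $b$ can change by an element of $Z(H)$ since $\gamma_b=\gamma_{b'}$ iff $b'b^{-1}\in Z(H)$.

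Next, the homomorphism property: using $\alpha_{(\delta_1,a_1)}\alpha_{(\delta_2,a_2)}=\alpha_{(\delta_1\delta_2,\delta_2(a_1)a_2)}$ with $\delta_i=\gamma_{b_i}$ gives $\gamma_{b_1}\gamma_{b_2}=\gamma_{b_1b_2}$, so the product $\widehat{\alpha}_{(\gamma_{b_1},a_1)}\widehat{\alpha}_{(\gamma_{b_2},a_2)}$ has $H$-part $\gamma_{b_1b_2}$, and thus maps to $b_1b_2Z(H)K=(b_1Z(H)K)(b_2Z(H)K)$; note $N_H(K)/Z(H)K$ makes sense because $Z(H)\leq N_H(K)$ and $K\unlhd N_H(K)$. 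For surjectivity I would take an arbitrary $bZ(H)K$ with $b\in N_H(K)$ and exhibit some $a\in H$ with $ba\phi(b)^{-1}\in C_H(\phi(K))$, e.g. choose $a$ so that this product is trivial, i.e. $a=b^{-1}\phi(b)$; then check $\alpha_{(\gamma_b,a)}$ is a genuine automorphism via Theorem~\ref{thm:principle} (the condition $\gamma_b(K)=K$ holds since $b\in N_H(K)$, and $\gamma_b\phi(k)=\phi\gamma_b\gamma_a(k)$ reduces to the identity $ba\phi(b)^{-1}\in C_H(\phi(K))$ after rearranging, using $\phi(b^{-1}kb)=\phi(b)^{-1}\phi(k)\phi(b)$).

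Finally, the kernel computation: $\widehat{\alpha}_{(\gamma_b,a)}\in\ker\chi_2$ iff $b\in Z(H)K$, and since $Z(H)K\leq C_H(\phi(K))\cdot(\text{stuff})$ — more carefully, writing $b=zk$ with $z\in Z(H)$, $k\in K$, I would show $\widehat{\alpha}_{(\gamma_b,a)}=\widehat{\alpha}_{(\gamma_k,a')}$ for a suitable $a'$ (absorbing the central part), and then that $\gamma_k$ for $k\in K$ together with the HNN-relation $tkt^{-1}=\phi(k)$ means $\alpha_{(\gamma_k,a')}$ is actually an inner automorphism of $G$ conjugated into the $C_K$ shape, so that in $\out_H(G)$ it lies in $C_K=\{\widehat{\alpha}_{(1,a)}:a\in H\}$; conversely every element of $C_K$ manifestly maps to $1Z(H)K$. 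I expect the main obstacle to be the well-definedness step — carefully tracking how $\widehat{\alpha}_{(\gamma_b,a)}$ depends on the representative requires analysing conjugation by an arbitrary $g\in G$ in the HNN-extension, and it is exactly here that the automorphism-induced hypothesis and Lemma~\ref{lem:conjbyFk} are indispensable, since a priori $g$ could be a long $t$-reduced word and one must rule that out before concluding $b$ is well-defined modulo $Z(H)K$.
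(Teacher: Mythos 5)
Your proposal follows essentially the same route as the paper's proof: the same surjectivity witness $a=b^{-1}\phi(b)$ produced via Theorem~\ref{thm:principle}, the same homomorphism check via the composition identity $\alpha_{(\delta_1,a_1)}\alpha_{(\delta_2,a_2)}=\alpha_{(\delta_1\delta_2,\delta_2(a_1)a_2)}$, the same kernel computation (absorb $Z(H)$ into $\gamma_b$, then use the HNN relation to rewrite $\widehat{\alpha}_{(\gamma_k,a)}$ as $\widehat{\alpha}_{(1,ka\phi(k)^{-1})}\in C_K$), and the same appeal to Lemma~\ref{lem:conjbyFk} for well-definedness. The one step your plan leaves implicit is the factorisation $\alpha_{(\gamma_b,a)}=\alpha_{(\gamma_b,b^{-1}\phi(b))}\alpha_{(1,\phi(b)^{-1}ba)}$ used in the paper to reduce the equality of two representatives to $\alpha_{(1,a_3)}\gamma_g=\alpha_{(\gamma_x,x^{-1}\phi(x))}$ with $x=b_1^{-1}b_2$, which is precisely the form Lemma~\ref{lem:conjbyFk} handles; applying the lemma directly to $\alpha_{(\gamma_{b_1},a_1)}\gamma_g=\alpha_{(\gamma_{b_2},a_2)}$ does not match its hypotheses because the $t$-coefficient $a_2$ need not equal $x^{-1}\phi(x)$.
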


\begin{proof}
Suppose the map $\chi_2$ is well-defined. Note that for all $b\in N_H(K)$, the pair ${(\gamma_b, b^{-1}\phi(b))}$ defines the automorphism ${\alpha}_{(\gamma_b, b^{-1}\phi(b))}$ by Theorem \ref{thm:principle}, and so $\chi_2$ is surjective. As $\widehat{\alpha}_{(\gamma_{b_1}, a_1)}\widehat{\alpha}_{(\gamma_{b_2}, a_2)}=\widehat{\alpha}_{(\gamma_{b_1b_2}, a_3)}$ for some $a_3\in H$, we have that $\chi_2$ is also a homomorphism. Now, $C_K\leq \ker\chi_2$, but as $\widehat{\alpha}_{(\gamma_k, a)}=\widehat{\alpha}_{(1, ka\phi(k)^{-1})}$ for all $k\in K$ we have that $\ker\chi_2=C_K$.

To see that the map $\chi_2$ is well-defined, suppose $\widehat{\alpha}_{(\gamma_{b_1}, a_1)}=\widehat{\alpha}_{(\gamma_{b_2}, a_2)}$ and it is sufficient to prove that $b_1^{-1}b_2\in Z(H)K$. Note that if $\alpha_{(\gamma_b, a)}\in\aut(G)$ then $\alpha_{(\gamma_b, b^{-1}\phi(b))}\in\aut(G)$, and then ${\alpha}_{(\gamma_{b}, a)}={\alpha}_{(\gamma_{b}, b^{-1}\phi(b))}{\alpha}_{(1, \phi(b)^{-1}ba)}$. Therefore, the identity $\widehat{\alpha}_{(\gamma_{b_1}, a_1)}=\widehat{\alpha}_{(\gamma_{b_2}, a_2)}$ becomes:
\begin{align*}\widehat{\alpha}_{(\gamma_{b_1}, b_1^{-1}\phi(b_1))}\widehat{\alpha}_{(1, \phi(b_1)^{-1}b_1a_1)}&=\widehat{\alpha}_{(\gamma_{b_2}, b_2^{-1}\phi(b_2))}\widehat{\alpha}_{(1, \phi(b_2)^{-1}b_2a_2)}\\
\Rightarrow\widehat{\alpha}_{(1, a_3)}&=\widehat{\alpha}_{(\gamma_{b_1^{-1}b_2}, (b_1^{-1}b_2)^{-1}\phi(b_1^{-1}b_2))}
\end{align*}
for some $a_3\in H$.
Hence, $\alpha_{(1, a_3)}\gamma_g=\alpha_{(\gamma_{b_1^{-1}b_2}, (b_1^{-1}b_2)^{-1}\phi(b_1^{-1}b_2))}$ for some $g\in G$ and $a_3\in H$. Therefore, the conditions of Lemma \ref{lem:conjbyFk} hold for the two underlying identities (where $x:=b_1^{-1}b_2$ and $y:=a_3$), hence $g\in K$ with $g^{-1}hg=(b_1^{-1}b_2)^{-1}hb_1^{-1}b_2$ for all $h\in H$, and so $b_1^{-1}b_2\in Z(H)K$ as required.
\end{proof}

Note that $N_K:=\{\widehat{\alpha}_{(\gamma_b, b^{-1}\phi(b))}: b\in N_H(K)\}$ is a subgroup of $\out_H^{(V)}(G)$, and $\out_H^{(V)}(G)=C_KN_K$. So by Lemma \ref{lem:kernelsemidirect} we just need $C_K\cap N_K=1$ to obtain a semidirect product in the statement of Theorem \ref{thm:description}. This possible splitting of the map $\chi_2$ is discussed in Section~\ref{sec:commutativity}.

\begin{lemma}\label{lem:BKisom}
Define $L=\{\phi^{-1}(k)k^{-1}: k\in K\cap Z(H)\}$. Then $L\unlhd C_H(K)$ and $\displaystyle C_K\cong \frac{C_H(K)}{L}$.
\end{lemma}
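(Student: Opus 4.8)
The plan is to realise $C_K$ as a homomorphic image of $C_H(K)$ and then identify the kernel with $L$. Since $\phi\in\aut(H)$ restricts to an isomorphism of $C_H(K)$ onto $C_H(\phi(K))$, and since $C_K=\{\widehat{\alpha}_{(1,a)}:a\in C_H(\phi(K))\}$, I would define
\[
\Psi\colon C_H(K)\longrightarrow\out_H(G),\qquad c\longmapsto\widehat{\alpha}_{(1,\phi(c))}.
\]
First I would check $\Psi$ is well defined: for $c\in C_H(K)$ the pair $(1,\phi(c))$ satisfies the hypotheses of Theorem~\ref{thm:principle}, since the condition ``$1\cdot\phi(k)=\phi\cdot1\cdot\gamma_{\phi(c)}(k)$ for all $k\in K$'' reduces to the true statement $\phi(c)\in C_H(\phi(K))$; hence $\alpha_{(1,\phi(c))}\in\aut(G)$, and as it fixes $H$ and sends $t\mapsto\phi(c)t$ we even have $\widehat\alpha_{(1,\phi(c))}\in\out_H(G)$. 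The identity $\alpha_{(1,a_1)}\alpha_{(1,a_2)}=\alpha_{(1,a_1a_2)}$ (the case $\delta_1=\delta_2=1$ of the composition identity recorded before Lemma~\ref{lem:conjbyFk}) shows $\Psi$ is a homomorphism, and as $\phi(c)$ ranges over all of $C_H(\phi(K))$ the image of $\Psi$ is precisely $C_K$. By the first isomorphism theorem it then suffices to prove $\ker\Psi=L$; this simultaneously gives $L\unlhd C_H(K)$ and $C_K\cong C_H(K)/L$.

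The crux is the claim that, for $a\in C_H(\phi(K))$, the automorphism $\alpha_{(1,a)}$ is inner if and only if $a=g^{-1}\phi(g)$ for some $g\in K\cap Z(H)$. The ``if'' direction is a direct check: for such $g$, the inner automorphism $\gamma_g$ fixes $H$ pointwise (as $g\in Z(H)$) and sends $t\mapsto g^{-1}tg=g^{-1}\phi(g)t$ (using $tgt^{-1}=\phi(g)$, valid since $g\in K$), so $\gamma_g=\alpha_{(1,g^{-1}\phi(g))}$. For ``only if'', suppose $\alpha_{(1,a)}=\gamma_g$. Then $g^{-1}hg=h$ for all $h\in H$, and rewriting $\gamma_g(t)=at$ as $g^{-1}(a^{-1}t)g=(g^{-1}a^{-1}g)(g^{-1}tg)=a^{-1}(at)=t=\phi(1)t$, I see that equalities~(\ref{equality:conjbyFk1}) and~(\ref{equality:conjbyFk2}) of Lemma~\ref{lem:conjbyFk} hold with $x=1$ and $y=a^{-1}$. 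Lemma~\ref{lem:conjbyFk} therefore gives $g\in K$; combined with the fact that $g$ centralises $H$ this forces $g\in K\cap Z(H)$, and then $tg=\phi(g)t$ turns $g^{-1}tg=at$ into $a=g^{-1}\phi(g)$.

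It remains to match this kernel with the stated $L$. For $k\in K\cap Z(H)$ one computes $\phi(\phi^{-1}(k)k^{-1})=k\phi(k)^{-1}=(k^{-1})^{-1}\phi(k^{-1})$, and since $k\mapsto k^{-1}$ permutes $K\cap Z(H)$ this shows $\phi(L)=\{g^{-1}\phi(g):g\in K\cap Z(H)\}$. Hence, using the crux claim together with injectivity of $\phi$, $c\in\ker\Psi$ iff $\phi(c)\in\{g^{-1}\phi(g):g\in K\cap Z(H)\}=\phi(L)$ iff $c\in L$, so $\ker\Psi=L$ and we are done. I expect the main obstacle to be the ``only if'' half of the crux claim, namely arranging the two displayed identities so that Lemma~\ref{lem:conjbyFk} applies; the remainder is bookkeeping with the composition identities for the $\alpha_{(\delta,a)}$ and with $\phi$.
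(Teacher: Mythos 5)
Your proof is correct and takes essentially the same route as the paper's: realise $C_K$ as a homomorphic image via the composition identity $\alpha_{(1,a_1)}\alpha_{(1,a_2)}=\alpha_{(1,a_1a_2)}$, check the ``if'' direction of the kernel claim by direct computation with $\gamma_g$ for $g\in K\cap Z(H)$, and prove the ``only if'' direction by applying Lemma~\ref{lem:conjbyFk} with $x=1$ to conclude the conjugating element lies in $K\cap Z(H)$. The only cosmetic differences are that you parameterise directly by $C_H(K)$ via $c\mapsto\widehat\alpha_{(1,\phi(c))}$ rather than establishing $C_K\cong C_H(\phi(K))/\phi(L)$ and transporting by $\phi$ afterwards, and that you feed $(x,y)=(1,a^{-1})$ into Lemma~\ref{lem:conjbyFk} with the conjugating element $g$ itself, whereas the paper uses $(x,y)=(1,a)$ and then passes to $k:=g^{-1}$; both bookkeeping choices are sound.
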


\begin{proof}
We prove that $C_K\cong \frac{C_H(\phi(K))}{\phi(L)}$; the result then follows. To begin, note that $\widehat{\alpha}_{(1, a_1)}\widehat{\alpha}_{(1, a_2)}=\widehat{\alpha}_{(1, a_1a_2)}$, and so $C_K$ is a homomorphic image of $C_H(\phi(K))$. We therefore prove that $\alpha_{(1, a)}\in\inn(G)$ if and only if $a\in \phi(L)$.

Clearly if $a=k\phi(k)^{-1}\in \phi(L)$ then $\alpha_{(1, a)}=\gamma_{k}^{-1}\in\inn(G)$.
On the other hand, suppose $\alpha_{(1, a)}\in\inn(G)$. To see that $a\in \phi(L)$, note that the conditions of Lemma \ref{lem:conjbyFk} hold (where $x=1$, $y=a$, and in the following we take $k:=g^{-1}$).
Therefore, we have that $k^{-1}hk=h$ for all $h\in H$ and that $k^{-1}tk=at$, where $k\in K$. The first identity implies $k\in Z(H)$ (and so $k\in Z(H)\cap K$), while the second implies $a=k^{-1}\phi(k)$. Hence, $a\in \phi(L)$.
\end{proof}

\p{Describing \boldmath{$\out^H(G)$}}
We can now give our description of the Bass--Jiang group $\out_H(G)$ for $G=H\ast_{(K, \phi)}$.

\begin{theorem}\label{thm:description}
Let $G\cong \langle H, t; tkt^{-1}=\phi(k), k\in K\rangle$, $K\lneq H$ and $\phi\in\aut(H)$, be an automorphism-induced $HNN$-extension.
Let $L=\{\phi^{-1}(k)k^{-1}: k\in K\cap Z(H)\}$.
Then $L$ is a normal subgroup of $C_H(K)$ and we have a diagram of exact sequences:
\begin{center}
\begin{tikzpicture}
  \matrix (m) [matrix of math nodes, row sep=1em,
    column sep=1.5em]{
    & 1& & & \\
    & C_H(K)/L&&&\\
    1& \out_H^{(V)}(G)& \out_H^0(G) & \displaystyle\frac{A_{(K, \phi)}\inn(H)}{\inn(H)} & 1\\
    & N_H(K)/Z(H)K&&&\\
    & 1&&&\\};
  \path[-stealth]
    (m-1-2) edge (m-2-2)
    (m-2-2) edge (m-3-2)
    (m-3-2) edge (m-4-2)
    (m-4-2) edge (m-5-2)
    (m-3-1) edge (m-3-2)
    (m-3-2) edge (m-3-3)
    (m-3-3) edge (m-3-4)
    (m-3-4) edge (m-3-5)
;
\end{tikzpicture}
\end{center}
where either $\out_H^0(G)=\out_H(G)$ or there exist some $\zeta\in\aut(H)$ and some $a\in H$ such that $\zeta(K)=\phi(K)$, $\zeta^2\gamma_a(K)=K$ and $\zeta\phi^{-1}(k)=\phi\zeta\gamma_a(k)$ for all $k\in K$, whence $\out_H^0(G)$ has index two in $\out_H(G)$.
\end{theorem}

\begin{proof}
By Theorem \ref{thm:BassJiangSec85}, it is sufficient to prove that $\out_H^{(V)}(G)$ decomposes as in the statement of Theorem \ref{thm:description}. This follows from Lemmas~\ref{lem:kernelsemidirect}~and~\ref{lem:BKisom}.
\end{proof}

\p{An alternative view}
Recall the definition of $A_{(K, \phi)}$:
\[
A_{(K, \phi)}:=\{\delta\in\aut(H): \delta(K)=K, \:\exists\: a\in H \textnormal{ s.t. } \delta\phi(k)=\phi\delta\gamma_a(k) \:\forall\: k\in K\}
\]
The following alternative description of the image group can be substituted in to Theorem \ref{thm:description}. Here, $\inn(N_H(K))=\{\gamma_b: b\in N_H(K)\}$.
\[
\frac{A_{(K, \phi)}\inn(H)}{\inn(H)}
\cong
\frac{A_{(K, \phi)}}{\inn(N_H(K))}
\]
This isomorphism holds because $\inn(H)\cap A_{(K, \phi)}=\inn(N_H(K))$; that $\inn(H)\cap A_{(K, \phi)}$ is a subgroup of $\inn(N_H(K))$ is clear from the definition of $A_{(K, \phi)}$, while if $\gamma_b(K)=K$ then $\gamma_b\in A_{(K, \phi)}$ because $\gamma_b\phi(k)=\phi\gamma_b\gamma_{b^{-1}\phi(b)}(k)$.


\section{The impact of commutativity in $H$}
\label{sec:commutativity}
The reader might feel cheated that the decomposition of $\out_H^{(V)}(G)$ in Theorem \ref{thm:description} did not split over the subgroups $C_K$ and $N_K$, as defined in Section~\ref{sec:description}. Indeed, $\out_H^{(V)}(G)=C_KN_K$ and $C_K\unlhd \out_H^{(V)}(G)$. However, elements of the form $\widehat{\alpha}_{(\gamma_c, c^{-1}\phi(c))}$ where $c\in Z(H)$ are contained in both subgroups, so the decomposition of $\out_H^{(V)}(G)$ in Theorem \ref{thm:description} does not split in general.
In this section we prove Theorem \ref{thm:trivialcenter}, which assumes that $Z(H)\leq\fix(\phi)$; here the decomposition of $\out_H^{(V)}(G)$ does split over $C_K$ and $N_K$. We also prove related results which assume commutativity conditions on $H$.

\p{Splitting the description of Theorem \ref{thm:description}}
We begin by ``reversing'' the map $\chi_2$ from Lemma \ref{lem:kernelsemidirect}, which allows us to determine if $\chi_2$ splits.

\begin{lemma}\label{lem:kernelsemidirectreverse}
The map
\begin{align*}
\overline{\chi_2}: N_H(K)&\rightarrow\out_H^{(V)}(G)\\
b&\mapsto\widehat{\alpha}_{(\gamma_b, b^{-1}\phi(b))}
\end{align*}
is a homomorphism with kernel $JK$, where $J=Z(H)\cap \operatorname{Fix}(\phi)$.
\end{lemma}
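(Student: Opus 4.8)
The plan is to verify directly that $\overline{\chi_2}$ is a homomorphism using the composition identity for the $\alpha_{(\delta,a)}$, then to compute the kernel by the criterion already established in Lemma~\ref{lem:kernelsemidirect}. First I would check $\overline{\chi_2}$ is well-defined: for $b\in N_H(K)$ we have $\gamma_b(K)=K$, so $\gamma_b\in A_K$ (as observed at the end of Section~\ref{sec:description}), and then the pair $(\gamma_b, b^{-1}\phi(b))$ satisfies the hypotheses of Theorem~\ref{thm:principle} (this is exactly the computation $\gamma_b\phi(k)=\phi\gamma_b\gamma_{b^{-1}\phi(b)}(k)$), so $\alpha_{(\gamma_b, b^{-1}\phi(b))}\in\aut(G)$ and $\widehat\alpha_{(\gamma_b, b^{-1}\phi(b))}\in\out_H^{(V)}(G)$. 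For the homomorphism property, applying the identity $\alpha_{(\delta_1,a_1)}\alpha_{(\delta_2,a_2)}=\alpha_{(\delta_1\delta_2,\,\delta_2(a_1)a_2)}$ gives $\alpha_{(\gamma_{b_1}, b_1^{-1}\phi(b_1))}\alpha_{(\gamma_{b_2}, b_2^{-1}\phi(b_2))}=\alpha_{(\gamma_{b_1b_2},\, a')}$ where $a' = \gamma_{b_2}(b_1^{-1}\phi(b_1))\,b_2^{-1}\phi(b_2)$; a short rearrangement shows $a' = (b_1b_2)^{-1}\phi(b_1b_2)$, using that $\gamma_{b_2}$ commutes with $\phi$ up to the inner correction — actually it is cleanest to just observe both sides are representatives of the form $\alpha_{(\gamma_{b_1b_2}, c)}$ with $c^{-1}$-type normalization, and then invoke well-definedness of $\chi_2$ together with $\chi_2\overline{\chi_2} = \mathrm{id}$ to conclude $\overline{\chi_2}(b_1)\overline{\chi_2}(b_2)=\overline{\chi_2}(b_1b_2)$ in $\out_H^{(V)}(G)$. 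So $\overline{\chi_2}$ is a homomorphism.

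Next, the kernel. By Lemma~\ref{lem:kernelsemidirect}, $\widehat\alpha_{(\gamma_b, b^{-1}\phi(b))}$ is trivial in $\out_H^{(V)}(G)$ iff it lies in $\ker\chi_2 = C_K$, i.e. iff $\alpha_{(\gamma_b,b^{-1}\phi(b))}\in\inn(G)$. So I must show this holds precisely when $b\in JK$ with $J = Z(H)\cap\fix(\phi)$. For the easy inclusion: if $b=zk$ with $z\in J$, $k\in K$, then since $z$ is central and $\phi$-fixed, $\gamma_b = \gamma_k$ and $b^{-1}\phi(b) = k^{-1}\phi(k)$, so $\alpha_{(\gamma_b,b^{-1}\phi(b))} = \alpha_{(\gamma_k, k^{-1}\phi(k))} = \gamma_k^{-1}\in\inn(G)$ (this is the computation already used in Lemma~\ref{lem:kernelsemidirect}). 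For the converse, suppose $\alpha_{(\gamma_b, b^{-1}\phi(b))}=\gamma_g$ for some $g\in G$. Evaluating on $H$ gives $\gamma_g|_H = \gamma_b|_H$, i.e. $g^{-1}hg = b^{-1}hb$ for all $h\in H$; evaluating on $t$ gives $g^{-1}tg = (b^{-1}\phi(b))t$. These are exactly conditions (\ref{equality:conjbyFk2}) and (\ref{equality:conjbyFk1}) of Lemma~\ref{lem:conjbyFk} with $x := b$ and $y := 1$, so Lemma~\ref{lem:conjbyFk} yields $g\in K$. Then $g^{-1}hg = b^{-1}hb$ for all $h$ forces $bg^{-1}\in Z(H)$, and from $g^{-1}tg = b^{-1}\phi(b)t$ together with $g^{-1}tg = g^{-1}\phi(g)t$ (valid since $g\in K$) we get $g^{-1}\phi(g) = b^{-1}\phi(b)$, i.e. $\phi(bg^{-1}) = bg^{-1}$. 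Writing $z := bg^{-1}\in Z(H)\cap\fix(\phi) = J$ gives $b = zg\in JK$, as required.

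I do not anticipate a serious obstacle: the only subtlety is bookkeeping of the inner-automorphism corrections when composing $\alpha_{(\delta,a)}$'s, and the cleanest route to the homomorphism property is to leverage $\chi_2\overline{\chi_2}=\mathrm{id}$ rather than compute the second coordinate by hand. The kernel computation is then an immediate application of Lemma~\ref{lem:conjbyFk}, which was evidently designed for exactly this purpose; the hypothesis $Z(H)\le\fix(\phi)$ of Theorem~\ref{thm:trivialcenter} is not needed here (note $J = Z(H)$ under that hypothesis), so this lemma is genuinely more general and will be combined afterward with $C_K\cong C_H(K)/L$ from Lemma~\ref{lem:BKisom} to produce the splitting in Theorem~\ref{thm:trivialcenter}.
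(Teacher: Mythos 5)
Your argument follows the paper's proof closely: the homomorphism property comes from the composition identity for $\alpha_{(\delta,a)}$ together with the computation $\gamma_{b_2}(b_1^{-1}\phi(b_1))\,b_2^{-1}\phi(b_2)=(b_1b_2)^{-1}\phi(b_1b_2)$, the inclusion $JK\subseteq\ker\overline{\chi_2}$ is a direct check, and the reverse inclusion is exactly the intended application of Lemma~\ref{lem:conjbyFk} with $x=b$, $y=1$. Two small points deserve correction. First, the proposed ``cleaner'' route of invoking $\chi_2\overline{\chi_2}=\mathrm{id}$ in place of computing the second coordinate does not close the argument: agreement under $\chi_2$ only shows that $\overline{\chi_2}(b_1)\overline{\chi_2}(b_2)$ and $\overline{\chi_2}(b_1b_2)$ differ by an element of $\ker\chi_2=C_K$, not that they are equal, so the direct computation of the second coordinate (which you do carry out, and which needs no commutation of $\gamma_{b_2}$ with $\phi$ — it is a bare cancellation) is genuinely required. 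Second, $\alpha_{(\gamma_k,k^{-1}\phi(k))}=\gamma_k$, not $\gamma_k^{-1}$ (apply $\alpha_{(\gamma_k,a)}\gamma_{k^{-1}}=\alpha_{(1,ka\phi(k)^{-1})}$ with $a=k^{-1}\phi(k)$); this does not affect your conclusion since the point is only that the map is inner, but the sign is worth getting right.
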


\begin{proof}
As $\widehat{\alpha}_{(\gamma_{b_1}, b_1^{-1}\phi({b_1}))}\widehat{\alpha}_{(\gamma_{b_2}, {b_2}^{-1}\phi({b_2}))}=\widehat{\alpha}_{(\gamma_{{b_1}{b_2}}, {b_2}^{-1}{b_1}^{-1}\phi({b_1}{b_2}))}$, the map $\overline{\chi_2}$ is a homomorphism. Note that $\ker\overline{\chi_2}\geq JK$ as for $b\in J$, $k\in K$ we have ${\alpha}_{(\gamma_{bk}, (bk)^{-1}\phi(bk))}=\gamma_k$. To see that the $\ker\overline{\chi_2}\leq JK$, suppose that ${\alpha}_{(\gamma_b, b^{-1}\phi(b))}=\gamma_g$. Then the conditions of Lemma \ref{lem:conjbyFk} hold for the two underlying identities (where $x=b$, $y=1$), so $k:=g\in K$. Hence, $bk^{-1}\in Z(H)$ and $b^{-1}\phi(b)=k^{-1}\phi(k)$. The second identity implies that $bk^{-1}\in\operatorname{Fix}(\phi)$. Therefore, $bk^{-1}\in Z(H)\cap \operatorname{Fix}(\phi)=J$ so $b\in JK$ as required.
\end{proof}

The hypotheses of Theorem \ref{thm:trivialcenter} are those of Theorem \ref{thm:description} with the additional assumption that $Z(H)\leq\fix(\phi)$. Here the vertical exact sequence from Theorem \ref{thm:description} splits. Theorem \ref{thm:trivialcenter} does not immediately follow from Theorem \ref{thm:description}.
The subgroup $N_K$ corresponds to the factor $N_H(K)/Z(H)K$, while $C_K$ corresponds to the factor $C_H(K)$. The action of $N_K$ on $C_K$ is $\alpha_{(1, a)}^{\alpha_{(\gamma_b, b^{-1}\phi(b))}}=\alpha_{(1, \gamma_{\phi(b)}(a))}$.

\begin{theorem}\label{thm:trivialcenter}
Let $G$ be as in Theorem \ref{thm:description}, and additionally assume $Z(H)\leq\fix(\phi)$.
We have a short exact sequence:
\[
1\rightarrow C_H(K)\rtimes\frac{N_H(K)}{Z(H)K} \rightarrow \out_H^0(G)\rightarrow \frac{A_{(K, \phi)}\inn(H)}{\inn(H)}\rightarrow 1
\]
where the conditions for the index $|\out_H(G):\out_H^0(G)|$ are as in Theorem \ref{thm:description}.
\end{theorem}

\begin{proof}
As $Z(H)\leq\fix(\phi)$, Lemma \ref{lem:kernelsemidirectreverse} implies that the map $\chi_2$ from Lemma \ref{lem:kernelsemidirect} splits. Theorem \ref{thm:trivialcenter} then follows from Theorem \ref{thm:description}.
\end{proof}

\p{Virtually embedding \boldmath{$N_H(K)/K$} into \boldmath{$\out(G)$}}
Lemma \ref{lem:finitecenter} applies Lemma \ref{lem:kernelsemidirectreverse} to obtain conditions implying a finite-index subgroup of $N_H(K)/K$ embeds into $\out(G)$. This normaliser-quotient is the basis of the framework described in Section \ref{sec:applications} (see also the discussion after Proposition \ref{prop:Aut}). Lemma \ref{lem:finitecenter} is fundamental to the paper \cite{LoganNonRecursive}. The hypotheses of Lemma \ref{lem:finitecenter} occur when, for example, $H$ is non-elementary hyperbolic, $K\unlhd W$ and $W$ is some torsion-free subgroup of $H$ (that is, $W$ contains no non-trivial elements of finite order).
\begin{lemma}
\label{lem:finitecenter}
If $K\unlhd W\leq N_H(K)$ with $W\cap Z(H)\cap\operatorname{Fix}(\phi)=1$ then $W/K$ embeds into $\out_H^{(V)}(G)$ with index dividing $|C_H(K)|\cdot |N_H(K): W|$.
\end{lemma}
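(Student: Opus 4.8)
The plan is to build the embedding of $V/K$ into $\out_H^{(V)}(G)$ by restricting the reverse map $\overline{\chi_2}$ from Lemma~\ref{lem:kernelsemidirectreverse} to $V$, and then to bound the index of the image using Lemma~\ref{lem:kernelsemidirect}. First I would consider the composite $\overline{\chi_2}|_V : V \to \out_H^{(V)}(G)$, $b\mapsto \widehat{\alpha}_{(\gamma_b, b^{-1}\phi(b))}$; this is a homomorphism by Lemma~\ref{lem:kernelsemidirectreverse}, and its kernel is $V\cap JK$ where $J=Z(H)\cap\fix(\phi)$. The key point is to show $V\cap JK = K$: since $K\unlhd V$ we certainly have $K\leq V\cap JK$; conversely, if $v=jk\in V$ with $j\in J$, $k\in K$, then $j=vk^{-1}\in V$ (as $K\leq V$), so $j\in V\cap J \leq V\cap Z(H)\cap\fix(\phi)=1$, hence $v=k\in K$. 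Thus $\overline{\chi_2}|_V$ descends to an injection $V/K \hookrightarrow \out_H^{(V)}(G)$.

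\textbf{Bounding the index.} For the index statement, I would analyse the image $\overline{\chi_2}(V)$ inside $\out_H^{(V)}(G)$ via the surjection $\chi_2:\out_H^{(V)}(G)\to N_H(K)/Z(H)K$ of Lemma~\ref{lem:kernelsemidirect}, whose kernel is $C_K$. Note $\chi_2\circ\overline{\chi_2}$ sends $b\in N_H(K)$ to $bZ(H)K$, i.e. it is the natural projection $N_H(K)\to N_H(K)/Z(H)K$. Therefore $\chi_2(\overline{\chi_2}(V)) = VZ(H)K/Z(H)K$, which has index $|N_H(K) : VZ(H)K|$ in $N_H(K)/Z(H)K$, a divisor of $|N_H(K):V|$. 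To control the remaining ``$C_K$-direction'' I would intersect with $\ker\chi_2 = C_K$: the quotient $\out_H^{(V)}(G)/\overline{\chi_2}(V)$ fits into an exact sequence with the piece $C_K/(C_K\cap\overline{\chi_2}(V))$ on one side and $VZ(H)K/Z(H)K$'s complement (of order dividing $|N_H(K):V|$) on the other. Since $C_K\cong C_H(K)/L$ with $L=\{\phi^{-1}(k)k^{-1}: k\in K\cap Z(H)\}$ by Lemma~\ref{lem:BKisom}, the order $|C_K|$ divides $|C_H(K)|$, so $|C_K/(C_K\cap\overline{\chi_2}(V))|$ divides $|C_H(K)|$. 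Combining, $|\out_H^{(V)}(G):\overline{\chi_2}(V)|$ divides $|C_H(K)|\cdot|N_H(K):V|$.

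\textbf{Main obstacle.} I expect the main subtlety to be the index bookkeeping: $\out_H^{(V)}(G)=C_KN_K$ is a product but need not be a semidirect product (as emphasised in the paragraph before Theorem~\ref{thm:trivialcenter}), so I must argue the index estimate without assuming a direct-product-type decomposition. The clean way is purely multiplicative: using $|G:AB|\cdot|\text{something}|$-style counting, from $\overline{\chi_2}(V)\cdot C_K$ having index $|N_H(K):VZ(H)K|$ (a divisor of $|N_H(K):V|$) in $\out_H^{(V)}(G)$, and $C_K$ having order dividing $|C_H(K)|$, one gets
\[
|\out_H^{(V)}(G):\overline{\chi_2}(V)| = |\out_H^{(V)}(G):\overline{\chi_2}(V)C_K|\cdot|\overline{\chi_2}(V)C_K:\overline{\chi_2}(V)|,
\]
where the first factor divides $|N_H(K):V|$ and the second divides $|C_K|$ hence $|C_H(K)|$. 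One small care-point: the statement implicitly requires these indices to be finite, i.e. $C_H(K)$ finite and $|N_H(K):V|$ finite; I would note this is part of the hypotheses being invoked (as in the stated examples where $H$ is non-elementary hyperbolic with $V$ torsion-free), so the divisibility assertion is vacuous or meaningful exactly when $|C_H(K)|\cdot|N_H(K):V|$ is finite.
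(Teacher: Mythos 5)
Your argument is correct and follows essentially the same route as the paper's own proof: you extract the embedding from the kernel computation of Lemma~\ref{lem:kernelsemidirectreverse} (showing $V\cap JK=K$), and you bound the index using $\out_H^{(V)}(G)=C_KN_K$, Lemma~\ref{lem:BKisom}, and the $|PQ:Q|=|P:P\cap Q|$ identity. The only cosmetic difference is that you route the index count through the intermediate subgroup $\overline{\chi_2}(V)C_K$, whereas the paper routes it through $N_K=\im(\overline{\chi_2})$; the two factorizations are transposed but yield the same bound.
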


\begin{proof}
Recall that $J:= Z(H)\cap \operatorname{Fix}(\phi)$. To prove that $W/K$ embeds into $\out_H^{(V)}(G)$, note that the map $\overline{\chi_2}$ induces maps $N_H(K)\rightarrow N_H(K)/K\rightarrow N_H(K)/JK$ (the maps are the canonical maps). It is sufficient to prove that if $w\in W\setminus K$ then $wK\not\mapsto JK$ under the second map. Suppose $wK\mapsto JK$ and $wK\neq K$, then there exists some $k\in K$ such that $wk\in J$. However, $wk\in W$ and $W\cap J=1$ so $w=k^{-1}\in K$ a contradiction. Hence, $W/K$ embeds into $\out_H^{(V)}(G)$.

To prove that the index of $W/K$ in $\out_H^{(V)}(G)$ divides $|C_H(K)|\cdot |N_H(K): W|$, first note that, by considering the map $\overline{\chi_2}$, the index of $W/K$ in $\out_H^{(V)}(G)$ clearly divides \[|\out_H^{(V)}(G):\im(\overline{\chi_2})|\cdot|N_H(K):W|.\]
We therefore prove that $|\out_H^{(V)}(G):\im(\overline{\chi_2})|$ divides $|C_H(K)|$. It is routine to construct an isomorphism between $P/(P\cap Q)$ and $PQ/Q$, $Q$ not necessarily normal in $PQ$. Take $P:=C_K$ and $Q:=N_K$. Noting that $\im\left(\overline{\chi_2}\right)=N_K$ and that $C_KN_K=\out_H^{(V)}(G)$, we have that $|\out_H^{(V)}(G):\im\left(\overline{\chi_2}\right)|$ divides $|C_K|$. Then $|C_K|$ divides $|C_H(K)|$ by Lemma \ref{lem:BKisom}, as required.
\end{proof}

\p{Describing \boldmath{$\aut_H^0(G)$}}
Our description of $\out_H^0(G)$ from Theorems~\ref{thm:description} and \ref{thm:trivialcenter} can be used to describe the full pre-image $\aut_H^0(G)$ of $\out_H^0(G)$ in $\aut(G)$. However, this description is not necessarily useful. Lemma \ref{lem:splitting} now gives conditions which allow for a complete description of $\aut_H^0(G)$. Lemma \ref{lem:splitting} is applied in the paper \cite{LoganHNN}. Recall that $N_K=\{\widehat{\alpha}_{(\gamma_b, b^{-1}\phi(b))}: b\in N_H(K)\}$.

\begin{lemma}\label{lem:splitting}
If $A_{(K, \phi)}\leq\inn(H)$ and $C_H(K)=1$ then $\out_H^0(G)\cong N_H(K)/K$ and $\aut_H^0(G)$ can be described as follows.
\begin{align*}
\aut_H^0(G)
&=\inn(G)\rtimes\out_H^0(G)\\
&\cong G\rtimes\frac{N_H(K)}{K}
\end{align*}
\end{lemma}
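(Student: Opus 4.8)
The plan is to run the machinery already built up in Sections~\ref{sec:description} and~\ref{sec:commutativity}, specialised to the hypotheses $A_K\leq\inn(H)$ and $C_H(K)=1$, and then lift the resulting description of $\out_H^0(G)$ up to $\aut_H^0(G)$ via the canonical map $\theta\colon G\to\inn(G)$. First I would observe that the hypothesis $A_K\leq\inn(H)$ forces the quotient $A_K\inn(H)/\inn(H)$ appearing in Theorem~\ref{thm:description} to be trivial, so that the horizontal exact sequence collapses to an isomorphism $\out_H^{(V)}(G)\cong\out_H^0(G)$. Next, the hypothesis $C_H(K)=1$ kills the subgroup $C_K$: indeed $L\unlhd C_H(K)=1$ forces $L=1$, and then Lemma~\ref{lem:BKisom} gives $C_K\cong C_H(K)/L=1$. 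Feeding $C_K=1$ into Lemma~\ref{lem:kernelsemidirect} shows that the homomorphism $\chi_2\colon\out_H^{(V)}(G)\to N_H(K)/Z(H)K$ is injective, hence an isomorphism onto its (surjective) image. Moreover $C_H(K)=1$ implies $Z(H)\leq C_H(K)=1$ — wait, more carefully: $Z(H)$ centralises all of $H$, in particular centralises $K$, so $Z(H)\leq C_H(K)=1$, whence $Z(H)=1$ and $Z(H)K=K$. Therefore $\chi_2$ yields $\out_H^0(G)\cong\out_H^{(V)}(G)\cong N_H(K)/K$, which is the first assertion. (Note also $Z(H)=1$ makes $Z(H)\leq\fix(\phi)$ automatic, so Theorem~\ref{thm:trivialcenter} is in force, though we do not strictly need it here.)

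For the description of $\aut_H^0(G)$, I would use the short exact sequence
\[
1\longrightarrow Z(G)\longrightarrow G\stackrel{\theta}{\longrightarrow}\inn(G)\longrightarrow 1
\]
together with the containment $\inn(G)\leq\aut_H^0(G)$ and the quotient $\aut_H^0(G)/\inn(G)\cong\out_H^0(G)$. The semidirect product decomposition $\aut_H^0(G)=\inn(G)\rtimes\out_H^0(G)$ amounts to producing a splitting of the quotient map $\aut_H^0(G)\to\out_H^0(G)$, i.e.\ a homomorphic section. Here is where Proposition~\ref{prop:Aut} does the work: the map $\tau\colon N_H(K)\to\aut_H^0(G)$, $d\mapsto\varphi_d$, is a homomorphism with kernel $K$, hence descends to an injective homomorphism $\overline\tau\colon N_H(K)/K\hookrightarrow\aut_H^0(G)$. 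Composing $\overline\tau$ with the quotient map $\aut_H^0(G)\to\out_H^0(G)$ gives, after identifying $\out_H^0(G)$ with $N_H(K)/K$ as above, a homomorphism $N_H(K)/K\to N_H(K)/K$; I would check this is the identity by chasing the formulas — $\varphi_d=\alpha_{(\gamma_d,\,d^{-1}\phi(d))}\gamma_d^{-1}$, so $\widehat{\varphi_d}=\widehat{\alpha}_{(\gamma_d,\,d^{-1}\phi(d))}$, which is exactly the image of $dK$ under $\overline{\chi_2}$ from Lemma~\ref{lem:kernelsemidirectreverse}, and $\overline{\chi_2}$ composed with $\chi_2$ is the identity on $N_H(K)/K$ once $Z(H)=1$. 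So $\overline\tau$ is a section, giving $\aut_H^0(G)=\inn(G)\rtimes\overline\tau(N_H(K)/K)=\inn(G)\rtimes\out_H^0(G)$. Finally, since $Z(H)=1$, I would argue $Z(G)=1$: any central element of $G$ would have to act trivially on the Bass--Serre tree, hence fix the vertex stabilised by $H$, hence lie in $H$, and centralising $H$ forces it into $Z(H)=1$; thus $G\cong\inn(G)$, and the semidirect product becomes $G\rtimes N_H(K)/K$, with the action being conjugation via $\varphi_d$.

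The main obstacle I anticipate is the bookkeeping in the last step: verifying that the composite $N_H(K)/K\xrightarrow{\overline\tau}\aut_H^0(G)\twoheadrightarrow\out_H^0(G)\xrightarrow{\sim}N_H(K)/K$ is genuinely the identity (and not some automorphism of $N_H(K)/K$), which requires carefully matching the normalisation conventions in Theorem~\ref{thm:principle}, the identity $\alpha_{(\delta_1,a_1)}\alpha_{(\delta_2,a_2)}=\alpha_{(\delta_1\delta_2,\,\delta_2(a_1)a_2)}$, and the definitions of $\chi_2$ and $\overline{\chi_2}$. The other point needing a little care is pinning down $Z(G)=1$ cleanly; the Bass--Serre argument above is the quickest route, but one could alternatively cite that $K\neq H$ forces $G$ to split nontrivially and invoke that centres of nontrivial HNN-extensions are contained in the stable letter's centraliser intersected with the base. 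Everything else is a direct substitution into results already proved.
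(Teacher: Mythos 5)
Your proof is correct and follows essentially the same route as the paper: you collapse the exact sequence of Theorem~\ref{thm:description}/Theorem~\ref{thm:BassJiangSec85} using $A_K\leq\inn(H)$ and $C_H(K)=1$ to identify $\out_H^0(G)$ with $N_H(K)/K$, then use the map $\tau$ of Proposition~\ref{prop:Aut} (equivalently, the paper's set $\Phi=\{\varphi_d\}$) to produce an explicit complement to $\inn(G)$ inside $\aut_H^0(G)$. The one loose phrase is the claim that a central element of $G$ ``acts trivially on the Bass--Serre tree''---what you actually want is that a central element normalizes $H$, hence fixes the vertex with stabilizer $H$ because $H$ lies in no edge stabilizer, hence lies in $H$ and so in $Z(H)=1$---but that is a trivially repaired slip, and your expanded justification of $\inn(G)\cong G$ is in fact more explicit than the paper's terse ``because $C_H(K)=1$.''
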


\begin{proof}
Note that ${\alpha}_{(\gamma_{b}, a)}={\alpha}_{(\gamma_{b}, b^{-1}\phi(b))}{\alpha}_{(1, \phi(b)^{-1}ba)}$, with $\phi(b)^{-1}ba\in C_H(\phi(K))$. Then ${\alpha}_{(1, \phi(b)^{-1}ba)}=1$ as $C_H(K)=1$, so $\out_H^{(V)}(G)=N_K$. As $A_{(K, \phi)}\leq\inn(H)$, Theorem \ref{thm:BassJiangSec85} gives us that $\out_H^0(G)=N_K$.

The representatives $\alpha_{(\gamma_b, b^{-1}\phi(b))}$ of $N_K$ generate a subgroup of $\aut(G)$ which contains non-trivial inner automorphisms. We therefore consider the set $\Phi:=\{\varphi_d: d\in N_H(K)\}$ where $\varphi_d=\alpha_{(\gamma_d, d^{-1}\phi(d))}\gamma_d^{-1}$. Now, $\Phi$ is a subgroup of $\aut_H^0(G)$, and it clearly contains a representative for each coset of $\aut_H^0(G)/\inn(G)$. Hence, $\Phi\inn(G)=\aut_H^0(G)$. Then $\varphi_d\in\inn(G)$ if and only if $d\in K$, if and only if $\varphi_d=1$, so $\Phi\cap\inn(G)=1$. Therefore, $\aut_H^0(G)=\inn(G)\rtimes\Phi$. The result then follows as $\Phi\cong N_H(K)/K$ by Proposition \ref{prop:Aut} while $\inn(G)\cong G$ because $C_H(K)=1$.
\end{proof}


\section{Tractable groups with pathological outer automorphism groups}
\label{sec:applications}
In this section we package the technical results of this paper into a framework for the construction of tractable groups with pathological outer automorphism groups. We then apply this framework to prove Theorem \ref{thm:BWfinitelypresented} from the introduction. Other applications of this framework can be found in \cite{LoganNonRecursive} and \cite{LoganHNN}.

\p{The framework}
In the aforementioned framework the pathological properties of $\out(G)$, where $G=H\ast_{(K, \phi)}$, are inherited from the normaliser-quotient $N_H(K)/K$.
The framework is as follows:
\begin{enumerate}
\item Obtain $K<H$ such that $N_H(K)/K$ has the required properties, e.g. is a stipulated group, is non-residually finite, etc.
\item Apply Theorems~\ref{thm:description}~or~\ref{thm:trivialcenter}, or Lemma \ref{lem:finitecenter} to obtain a description of $\out_H(G)$ in terms of $N_H(K)/K$,
\item Apply Theorem \ref{thm:equality} to obtain $\out_H(G)=\out(G)$, and so obtain a description of $\out(G)$ in terms of $N_H(K)/K$,
\item Apply Lemma \ref{lem:splitting} to obtain $\aut_H(G)=\inn(G)\rtimes\out(G)$, and so obtain a description of $\aut(G)$ in terms of $N_H(K)/K$.
\end{enumerate}

\p{Proof of Theorem \ref{thm:BWfinitelypresented}}
We now prove Theorem \ref{thm:BWfinitelypresented}.

\begin{proof}[Proof of Theorem \ref{thm:BWfinitelypresented}]
Suppose $Q$ is finitely presented with Serre's property FA. Now, there exist torsion-free hyperbolic groups with Serre's property FA \cite{Pride1983some}, and hence there exists a torsion-free hyperbolic group $H$ with Serre's property FA and with a finitely generated non-cyclic normal subgroup $K$ such that $H/K\cong Q$ \cite{belegradek2008rips}. Form $G_Q=\langle H, t; k^t=k, k\in K\rangle$. Note that $G_Q$ is residually finite if (and only if) $Q$ is residually finite \cite[Proposition 2.2]{LoganNonRecursive}.

Firstly note that $\out_H(G_Q)=\out(G_Q)$, which follows from Theorem \ref{thm:equality} because $K$ is normal so conjugacy maximal in $H$, and because $H$ has Serre's property FA. Secondly, note that $N_H(K)/K$ embeds with finite index into $\out_H(G_Q)$, which follows from Theorem \ref{thm:description} because $C_H(K)=1$ as $H$ is torsion-free hyperbolic and $K$ is non-cyclic, and because $\out(H)$ is finite as $H$ is hyperbolic with Serre's property FA \cite{levitt2005automorphisms}.

Hence, $Q\cong H/K=N_H(K)/K\leq_f\out_H(G_Q)=\out(G_Q)$ as required.
\end{proof}

\bibliographystyle{amsalpha}
\bibliography{BibTexBibliography}
\end{document}